\title{Reverse mathematics and Weihrauch analysis motivated by finite complexity theory}
\author{Zack BeMent,
Jeffry Hirst\thanks{Corresponding author:  Jeffry Hirst hirstjl@appstate.edu  \newline Authors' pre-peer-review draft.},
and Asuka Wallace}
\date{May 4, 2021}
\theoremstyle{plain}
\newtheorem{theorem}{Theorem}
\newtheorem{corollary}[theorem]{Corollary}  %Corollaries are numbered with theorems
\newtheorem{lemma}[theorem]{Lemma}
\theoremstyle{definition}
\newtheorem*{definition}{Definition}      %Definitions are not numbered.
\newtheoremstyle{dotless}{}{}{}{}{\bfseries}{}{ }{}
\theoremstyle{dotless}
\newtheorem*{dLPO}{\sf LPO}     %Definitions associated with combinatorial properties
\newtheorem*{dLGk}{{\sf LG}$k$}
\newtheorem*{dGCk}{{\sf GC}$k$}
\newtheorem*{dTCk}{{\sf TGC}$k$}
\newtheorem*{dWKL}{{\sf WKL}}
\newtheorem*{dWKn}{{\sf WKL}$n$}
\newtheorem*{dLLPO}{\sf LLPO}
\newtheorem*{dWF}{$\wf$}
\newtheorem*{dS}{${\sf S}$}
\newtheorem*{dSL}{${\sf S}_L$}
\newtheorem*{dSLv}{$\slv$}
\newtheorem*{dRC}{$\sf{RC}$}
\newtheorem*{dD}{$\sf{D}$}
\setlist[enumerate]{label=\rm{(\arabic*)}, ref=\arabic*}
\newcommand{\nat}{\mathbb N}
\newcommand{\rca}{{\sf RCA}_0}
\newcommand{\aca}{{\sf ACA}_0}
\newcommand{\atr}{{\sf ATR}_0}
\newcommand{\poo}{{\Pi^1_1 {\text -}{ \sf{CA}}_0}}
\newcommand{\rcaw}{{\sf RCA}_0^\omega}
\newcommand{\wkl}{{\sf WKL}}
\newcommand{\seq}[1]{{\langle #1\rangle}}
\newcommand{\haw}{{\widehat{\mathsf{E}\text{-}\mathsf{HA}}}\strut^\omega_{\raise2pt\hbox{\scriptsize${\mathord{\upharpoonright}}$}}}
\newcommand{\probP}{{\sf P}{\rm :}\forall x (p_1 (x) \to \exists y \, p_2 (x,y))}
\newcommand{\probQu}{{\sf Q}{\rm :}\forall u (q_1 (u) \to \exists v \, q_2 (u,v))}
\newcommand{\wlt}{\le_{W}}
\newcommand{\wslt}{<_{W}}
\newcommand{\swlt}{\le_{sW}}
\newcommand{\weq}{\equiv_{W}}
\newcommand{\sweq}{\equiv_{sW}}
\newcommand{\cat}{{^\smallfrown}}
\newcommand{\dotminus}{\frac{\cdot}{~~}}
\newcommand{\lgk}[1]{{\sf LG}{#1}}
\newcommand{\lpo}{{\sf{LPO}}}
\newcommand{\gck} [1] {{\sf GC}{#1}}
\newcommand{\tgc} [1] {{\sf TGC}{#1}}
\newcommand{\scd}{{\sf seq}}
\newcommand{\wkn}[1]{{\sf WKL}{#1}}
\newcommand{\llpo}{{\sf LLPO}}
\newcommand{\wf}{{\sf WF}}
\newcommand{\wfh}{\widehat{\sf{WF}}}
\newcommand{\pon}[1]{{\sf{P}}{#1}}
\newcommand{\slv}{{\sf S}_{\vec L}}
\begin{document}

\maketitle

\begin{abstract}
We extend a study by Lempp and Hirst of infinite versions
of some problems from finite complexity theory, using
an intuitionistic version of reverse mathematics and
techniques of Weihrauch analysis.
\end{abstract}

An early article of Hirst and Lempp \cite{hl} was motivated by the following example.
The problem of determining if a finite graph has a Hamiltonian path is NP complete,
while the problem for Eulerian paths is in the class P.  In reverse mathematics,
the problem of selecting which infinite graphs in an infinite sequence have Hamiltonian
paths is equivalent to $\poo$, while the corresponding problem for Eulerian graphs
is equivalent to $\aca$.  While very evocative, Hirst and Lempp showed that this
parallel does not generally hold.  In conjunction with an AMS special session in
honor of Lempp's birthday, we revisited these early results using the tools
of Weihrauch analysis in two ways.

In the next section we concentrate on formalized Weihrauch reductions for
graph theoretic problems.  This approach is limited to problems that can be
expressed with particularly simple formulas, having certain subformulas that
are $\exists$-free.  The process yields both reverse mathematical and Weihrauch
reducibility results from a single argument.  
More importantly, it facilitates the use of techniques from Weihrauch analysis,
like parallelization, in reverse mathematics proofs.
%However, our primary interest is that it 
%allows the application in reverse mathematics proofs of techniques common in
%Weihrauch analysis, like parallelization.
Additionally, the intuitionistic formal systems
used admit proof mining \cite{kbook}, so terms corresponding to the reduction functionals could
be extracted from formal proofs.  If we view the proofs as a form of verification,
formal Weihrauch reduction could offer a framework for finding verified extensions
of a trusted library of routines.

The final section addresses Weihrauch analysis of stronger results, with parallelizations
at the $\poo$ level.  The formulas describing the problems are too complicated for
formal analysis techniques, so we turn to traditional Weihrauch analysis.  The results
extend the currently small catalog of Weihrauch problems at this level, for example like those
in \cite{leaf}.  Weihrauch analysis at the level of $\poo$ appears to parallel
familiar reverse mathematics closely, in contrast to recent results related to
$\atr$ by Kihara, Marcone, and Pauly \cite{kmp}. 

\section{Formalized Weihrauch analysis}
The following development of formalized Weihrauch reducibility draws from
work of Hirst and Mummert \cite{hm}.  Like them, we work in $i \rcaw$, Kohlenbach's \cite{k2001}
higher order axiomatization for reverse mathematics, restricted to intuitionistic logic.
Informally, we may view $i\rcaw$ as an axiomatization of intuitionistic arithmetic that can
prove the existence of computable functions and computable functionals.  For more
details of the axiomatization, see \cite{hm}.

We formalize a Weihrauch problem with a formula
$\forall x ( p_1 (x) \to \exists y\, p_2 (x,y))$ where $p_1(x)$ indicates that $x$ is an accepted input
and $p_2 (x,y)$ indicates that $y$ is a solution of $x$.  Here $y$ may be a function, set, or number,
depending on the problem.  For total problems, that is, those problems accepting all sets as inputs,
we can use the more simple representation $\forall x \exists y \, p(x,y)$.

Some of the results connecting provability and formalized Weihrauch reducibility are restricted
to a family of formulas called $\Gamma_1$ by Troelstra~\cite{troelstra}.
A formula is {\em $\exists$-free} if it is built from atomic formulas using only universal quantification
and the connectives $\land$ and $\to$.
(Troelstra includes $\bot$ as an atomic formula, so $\neg P$ abbreviates $P \to \bot$.)
The collection $\Gamma_1$ consists
of those formulas defined inductively by the following:
\begin{list}{$\bullet$}{}
\item  All atomic formulas are elements of $\Gamma_1$.
\item  If $A$ and $B$ are in $\Gamma_1$, then so are $A \land B$, $A \lor B$,
$\forall x A$, and $\exists x A$.
\item  If $A$ is $\exists$-free and $B$ is in $\Gamma_1$ then $\exists x A \to B$ is in $\Gamma_1$,
where $\exists x$ denotes a block of zero or more existential quantifiers.
\end{list}

\begin{definition}
Suppose $\probP$ and $\probQu$ are problems.  The reduction prenex formula for ${\sf Q} \wlt {\sf P}$
is the formula:
\[
S^{\sf P}_{\sf Q}:
\forall u \exists x \forall y \exists v ( q_1(u) \to ( p_1 (x) \land (p_2 (x,y ) \to q_2 (u,v) ) ) )
\]
If the problems are total, we can write $\sf P$ as
$\forall x \exists y \, p(x,y)$, $\sf Q$ as $\forall u \exists v \, q(u,v)$ and $S^{\sf P}_{\sf Q}$ as
$\forall u \exists x \forall y \exists v ( p(x,y) \to q(u,v))$.
\end{definition}

The implication $S^{\sf P}_{\sf Q} \to ({\sf P} \to {\sf Q})$ is provable in intuitionistic predicate calculus,
though the proof of the converse requires classical logic.  If the constituent formulas
$p_1$, $p_2$, $q_1$, and $q_2$ are all $\exists$-free, then the matrix of $S^{\sf P}_{\sf Q}$
is also $\exists$-free and $S^{\sf P}_{\sf Q}$ is in $\Gamma _1$.  This also holds for total problems.

If $\sf P$ and $\sf Q$ are Weihrauch problems, we can formalize ${\sf Q} \wlt {\sf P}$ by asserting
the existence of Skolem functions for the existential quantifiers of $S^{\sf P}_{\sf Q}$.
Using $\sf P$ and $\sf Q$ as in the definition, the relation ${\sf Q} \wlt {\sf P}$ can be
translated into the language of $i\rcaw$ as:
\[
\exists \Phi \exists \Psi \forall u \forall y
(q_1(u) \to (p_1 (\Phi(u))\land(p_2 (\Phi(u),y)\to q_2 (u, \Psi(y)))))
\]
Here the functionals $\Phi$ and $\Psi$ are of type $1\to1$, which is our primary motivation
for working in higher order subsystems.

Using the preceding notation, we can present the following slightly modified version
of Theorem 1 of Hirst and Mummert \cite{hm}.

\begin{theorem}\label{hm1}
Suppose $\sf P$ and $\sf Q$ are problems and
the reduction prenex formula
$S^{\sf P}_{\sf Q}$  is in $\Gamma_1$.
Then $i\rcaw\vdash S^{\sf P}_{\sf Q}$ if and only if
$i\rcaw \vdash {\sf Q} \wlt {\sf P}$.
\end{theorem}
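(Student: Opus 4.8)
The plan is to prove the two implications separately, with the direction from provability of $S^{\sf P}_{\sf Q}$ to provability of the formalized reduction carrying all of the weight; the $\Gamma_1$ hypothesis will be used only there. For the easy direction, suppose $i\rcaw\vdash{\sf Q}\wlt{\sf P}$ and fix functionals $\Phi,\Psi$ witnessing the displayed $\exists\Phi\exists\Psi$ statement. Given $u$, instantiate the block $\exists x$ of $S^{\sf P}_{\sf Q}$ by $\Phi(u)$, and given $y$, instantiate $\exists v$ by $\Psi(y)$; the resulting instance of the matrix is exactly the body of the reduction at $u$ and $y$, so two applications of existential introduction yield $S^{\sf P}_{\sf Q}$. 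This is available already in intuitionistic predicate calculus and needs neither $\Gamma_1$ nor any comprehension.

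For the forward direction, suppose $i\rcaw\vdash S^{\sf P}_{\sf Q}$ and abbreviate the matrix $q_1(u)\to(p_1(x)\land(p_2(x,y)\to q_2(u,v)))$ by $M(u,x,y,v)$, so that $S^{\sf P}_{\sf Q}$ is $\forall u\exists x\forall y\exists v\,M$. Since a leading universal or existential quantifier can enter $\Gamma_1$ only through the closure clause, membership of $S^{\sf P}_{\sf Q}$ in $\Gamma_1$ forces $M$ itself to lie in $\Gamma_1$. The quantifier-free choice $\qfac$ available in $i\rcaw$ does not suffice to Skolemize $S^{\sf P}_{\sf Q}$ directly, because $M$ may carry universal quantifiers inside $p_1,p_2,q_1,q_2$; instead I would invoke the soundness theorem for modified realizability over the underlying intuitionistic higher order system, as in Kohlenbach's proof mining framework \cite{kbook} over a base such as $\haw$. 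Soundness yields a closed term $t$, definable in the system, together with a proof that $t\,\mathbf{mr}\,S^{\sf P}_{\sf Q}$.

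Unfolding the clauses of modified realizability through the prefix $\forall u\exists x\forall y\exists v$ decomposes $t$ into a functional $\Phi$ computing the witness for $x$ from $u$, a functional $\Psi$ computing the witness for $v$ from $u$ and $y$, and a residual realizer $r$ of $M$; both witnessing functionals have type $1\to1$ as the formalization requires. Thus $i\rcaw$ proves
\[
\forall u\,\forall y\,\bigl(r(u,y)\,\mathbf{mr}\,M(u,\Phi(u),y,\Psi(u,y))\bigr).
\]
This is where $\Gamma_1$ does its work. Following Troelstra \cite{troelstra}, for any $A\in\Gamma_1$ every implication premise is $\exists$-free up to a leading block of existential quantifiers, so realizability of $A$ provably implies $A$: one proves $\exists z\,(z\,\mathbf{mr}\,A)\to A$ by induction on the generation of $\Gamma_1$, the implication case using that an $\exists$-free premise furnishes its own trivial realizer. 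Applying this to $M$ discharges $r$ and gives $i\rcaw\vdash\forall u\forall y\,M(u,\Phi(u),y,\Psi(u,y))$, so $\Phi$ and $\Psi$ witness ${\sf Q}\wlt{\sf P}$.

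I expect the main obstacle to be the bookkeeping at this interface rather than any single hard idea: one must check that modified realizability is sound over the specific weak system $i\rcaw$ with realizers confined to the intended type-$1\to1$ term class, and that the realizability-implies-truth principle for $\Gamma_1$ goes through over $i\rcaw$ rather than over full intuitionistic arithmetic. One must also reconcile the inner witness's natural dependence on both $u$ and $y$ with the back functional as displayed, which is accommodated by the standard Weihrauch convention that the solution-transforming functional retains access to the original input.
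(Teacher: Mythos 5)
Your proposal is correct and takes essentially the same route as the paper: the paper's proof simply cites Theorem 1 of Hirst and Mummert together with Kohlenbach's proof mining machinery, and what that citation compresses is precisely your argument — the trivial instantiation direction, plus term extraction via soundness of modified realizability combined with the fact that realizability provably implies truth for $\Gamma_1$ formulas (the implication clause handled by the triviality of realizers for $\exists$-free premises). Your observation that $S^{\sf P}_{\sf Q}\in\Gamma_1$ forces the matrix itself into $\Gamma_1$ is exactly the ``one modification'' the paper notes is needed to align its hypothesis with that of Hirst and Mummert.
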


\begin{proof}
The result follows from the application of Kohlenbach's \cite{kbook} proof mining technology
to extract terms corresponding to the Skolem functions.
Only one modification to the proof of Theorem 1 of Hirst
and Mummert \cite{hm} is needed.  The hypothesis of their theorem specifies that
the matrix of $S^{\sf P}_{\sf Q}$ is in $\Gamma_1$, which by the definition of
$\Gamma_1$ is equivalent to $S^{\sf P}_{\sf Q} \in \Gamma_1$.
\end{proof}

The following theorem is the intuitionistic analog of a conservation result
of Kohlenbach \cite{k2001}.

\begin{theorem}\label{cons}
The system $i\rcaw$ is conservative over $i\rca$ for second order formulas.
\end{theorem}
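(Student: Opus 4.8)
The plan is to follow Kohlenbach's conservation argument for $\rcaw$ over $\rca$ \cite{k2001}, which builds a model of the higher-type system over a given model of the second-order system, and to verify that each step survives the restriction to intuitionistic logic. Rather than passing through classical Tarskian models (which would only yield classical conservation), I would recast the argument as a syntactic interpretation of $i\rcaw$ into $i\rca$ that appeals nowhere to excluded middle. Concretely, I would define a translation $\psi \mapsto \psi^{*}$ of the finite-type language into the second-order language by interpreting each type by a class of codes: types $0$ and $1$ are interpreted transparently as numbers and functions, while a higher type $\sigma\to\tau$ is interpreted by codes for the functionals generated from the combinators and the type-$0$ recursor. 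Quantifiers over a type become quantifiers over its code class, the combinator and recursor constants become their defining operations, and atomic formulas (the equalities governed by the extensionality axioms) are translated accordingly. The translation is arranged so that on a second-order formula $\phi$ it acts as the identity up to provable equivalence, i.e. $i\rca \vdash \phi^{*}\leftrightarrow\phi$.

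With the translation in hand, conservation follows by an external induction on $i\rcaw$-derivations, provided two things hold. First, the translation must commute with $\land$, $\lor$, $\to$ and the quantifiers; since the interpretation is a relativization to definable classes this is immediate and, crucially, requires no classical reasoning, so every inference rule of intuitionistic predicate logic is preserved. Second, the $*$-translate of each nonlogical axiom of $i\rcaw$ (those of its higher-type base $\haw$ --- the combinator and recursor equations, extensionality, and quantifier-free induction --- together with $\qfac$) must be a theorem of $i\rca$. Granting these, induction on the derivation gives $i\rcaw\vdash\psi \Rightarrow i\rca\vdash\psi^{*}$; specializing to a second-order $\phi$ and using $i\rca\vdash\phi^{*}\leftrightarrow\phi$ yields $i\rcaw\vdash\phi \Rightarrow i\rca\vdash\phi$, which is the claim.

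The main obstacle is the verification of the second item inside the weak base theory $i\rca$, whose only comprehension is $\Delta^0_1$. The delicate case is $\qfac$: from $\forall x^1\exists y^0 A_0(x,y)$ with $A_0$ quantifier-free one must produce a choice functional $Y$. Because $A_0$ is quantifier-free it is decidable, so bounded minimization is intuitionistically legitimate and the least-witness function $x\mapsto \mu y\,A_0(x,y)$ is $\Delta^0_1$-definable; hence $Y$ exists as a code in $i\rca$. This is exactly where the quantifier-free restriction is essential --- both for $\Delta^0_1$-definability and for the constructive validity of the minimization. One must also check that the coded type structure is closed under the operations named by the combinators and the recursor and respects the extensionality axioms, using only $\Delta^0_1$-comprehension and $\Sigma^0_1$-induction, and that the restricted (``hat'') higher-type apparatus never forces a functional beyond what $i\rca$ can code. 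Since Kohlenbach's construction already proceeds by building such a model within the base theory and uses no instance of excluded middle, I expect it to transfer essentially verbatim, the only genuinely new content being the bookkeeping that confirms intuitionistic soundness of the translation.
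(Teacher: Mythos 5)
Your high-level strategy---recast Kohlenbach's conservation argument as a syntactic interpretation of $i\rcaw$ into $i\rca$ and check that every step is intuitionistically sound---is exactly the route the paper takes: it cites Kohlenbach's Proposition 3.1, whose proof formalizes Troelstra's ECF model (Section 2.6.5 of Troelstra), and observes that Troelstra's verifications (2.6.12, 2.6.20) are already carried out in intuitionistic systems. But your choice of type interpretation is not Kohlenbach's, and it breaks at precisely the step you yourself flag as delicate. You interpret a higher type $\sigma\to\tau$ by ``codes for the functionals generated from the combinators and the type-$0$ recursor,'' i.e., a closed-term model. The ECF interpretation instead takes the hereditarily \emph{continuous} functionals: a type-$2$ object is any continuous functional coded by a type-$1$ associate (neighborhood function) present in the second-order model, and similarly upward. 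This class is vastly larger than the term-generated one and is not ``generated'' by anything.

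The difference is fatal for the verification of $\qfac$. Working in $i\rca$ under the premise $\forall x^1\exists y^0 A_0(x,y)$, the least-witness functional $x\mapsto\mu y\,A_0(x,y)$ is continuous, and because $A_0$ is quantifier-free (so $A_0(x,y)$ is decided by a finite initial segment of $x$) it has a $\Delta^0_1$-definable \emph{associate}; hence it lies in ECF and the translated axiom is provable there. But $\Delta^0_1$-definability of an associate does not place the functional in \emph{your} code class: membership there requires being denoted by a term built from the combinators and the restricted recursor, and an unbounded search operator is in general no such functional---nor can one manufacture, inside $i\rca$ and uniformly from the mere assumption $\forall x\exists y\,A_0$, a combinator/recursor term computing witnesses. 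Your sentence ``the least-witness function is $\Delta^0_1$-definable; hence $Y$ exists as a code'' conflates these two notions of code and is a non sequitur under your own definition of the interpretation; this is exactly why the conservation proof goes through ECF rather than the term model. (A minor related slip: the minimization involved is unbounded, not bounded; it is intuitionistically legitimate only because $A_0$ is decidable and existence is assumed.) If you replace ``generated from the combinators and the type-$0$ recursor'' by ``hereditarily continuous functionals coded by associates,'' the rest of your outline goes through as written and then coincides with the paper's proof.
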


\begin{proof}
Kohlenbach's conservation result for $\rcaw$ over $\rca$ appears
as Proposition 3.1 in his article on higher order reverse mathematics \cite{k2001}.
Kohlenbach's proof is based on the formalization of the extensional model
of the hereditarily continuous functionals (ECF) as presented in
Section 2.6.5 of Troelstra's book \cite{troelstra}.  The arguments of
Troelstra (e.g.~2.6.12 and 2.6.20 of \cite{troelstra}) are carried out
in intutionistic systems.  Similarly, Kohlenbach's proof holds for $i\rcaw$ and $i\rca$.
For other related discussion, see Theorem 2.7
and Theorem 2.8 of Hirst and Mummert \cite{hmunif}.
\end{proof}

\subsection{Local graph coloring}\label{section2}

The following encoding of graphs is useful in exploring graph coloring problems.
We can view a countable infinite graph as having $\nat$ as its vertex set.
Fix a primitive recursive bijective pairing function, $p$, mapping $\nat$ onto $\{ (a,b) \mid a< b\}$, the
subset of $\nat \times \nat$ consisting of increasing pairs.  Both $p$ and $p^{-1}$ can be
defined in such a way that $i\rcaw$ proves their existence and salient properties.
Using this pairing function, any function $e : \nat \to \nat$ can be viewed as a characteristic
function for the edge set of a graph $G$.  If $e(n) \neq 0$ then the edge $p(n)$ is in $G$,
and if $e(n) = 0$ then $p(n)$ is not in $G$.  We will conflate $G$ with the set of
codes for the edges of $G$ and write $(a,b)\in G$ as a shorthand for
$e(p^{-1} (a,b) ) \neq 0$.

For any graph $G$ and any $m$,
let $G_m$ denote the finite subgraph
with vertices $\{0, \dots , m\}$ and edge set
$\{(a,b) \mid a< b \le m \land (a,b)\in G \}$.
We say that $G_m$ has a $k$-coloring if there is a finite function
$f:m\to k$ such that for all $a <b\le m$, $(a,b) \in G$ implies $f(a) \neq f(b)$.
Informally, vertices connect by an edge must have distinct colors.
The existence of $k$-colorings for (initial) finite subgraphs can be
formulated as a problem.

\begin{dLGk}
(Local $k$-coloring for graphs):  Fix $k$.
For a graph $G$ (encoded by a characteristic function for its edge set), there
is a value $\lgk{k}(G)$ such that $\lgk{k}(G)=0$ implies that for every $m$ the subgraph
$G_m$ has a $k$-coloring, and $\lgk{k}(G)=m>0$ implies that
$G_m$ has no $k$-coloring and $G_{m-1}$ has a $k$-coloring.
\end{dLGk}

Let $c(G,m)$ denote a primitive recursive function such that
$c(G,m) = 1$ if $G_m$ has a $k$-coloring and $c(G,m)=0$ if $G_m$ has no
$k$-coloring.  Using this function, we can formalize the predicate $\lgk{k}(G)=n$
as the following $\exists$-free formula,
\[
\begin{split}
(n=0 \to \forall m (c(G,m) = 1) )\land & \\
(n>1 \to  (c(G,n)= & 0 \land \forall t (t<n \to c(G,t)=1)))
\end{split}
\]
and note that $\lgk{k}$ is a total problem of the form
$\forall G \exists n ( \lgk{k}(G) = n )$.
We will compare $\lgk{k}$ to a version of the limited principle of omniscience.

\begin{dLPO}
(Limited principle of omniscience):  For every $p: \nat \to \nat$ there is a value
$\lpo (p)$ such that $\lpo (p) = 0$ implies that $\forall k ~p(k)>0$, and
$\lpo (p) =m>0$ implies that $p(m-1) = 0$ and for all $t<(m-1)$, $p(t) > 0$.
\end{dLPO}

We can formalize the predicate $\lpo(p)=n$ as the following $\exists$-free formula,
\[
(n=0 \to \forall k ( p(k) > 0 ))\land (n>0 \to (p(n-1)=0 \land \forall t(t<(n-1)\to p(t)>0)))
\]
and note that $\lpo$ is a total problem of the form $\forall p \exists n (\lpo(p)=n )$.
This version of $\lpo$ differs from that presented in the survey by
Brattka, Gherardi, and Pauly \cite{bgp}.  It is Weihrauch equivalent to their version,
but not strongly Weihrauch equivalent, as the range of their version includes only $\{0,1\}$
and the range of this version includes all of $\nat$.  However, for our purposes it is desirable
to have the underlying predicate be $\exists$-free.

\begin{lemma}\label{LGL}
$( i\rcaw )$
For each $k \ge 1$, both $S^{\lgk{k}}_\lpo$ and $S^\lpo_{\lgk{k}}$ hold.
\end{lemma}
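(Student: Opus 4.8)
The plan is to establish each of the two reduction prenex formulas by exhibiting an explicit forward construction of the oracle input together with a decoding map on the output, and then verifying the governing implication. Because in every case the forward construction is primitive recursive in the given input and the decoding is a fixed primitive recursive function of a single number, the defining functionals (of type $1\to1$) exist in $i\rcaw$, and the implications can be checked using only decidable arithmetic, so no appeal to excluded middle is required. Unfolding the definition for total problems, $S^\lpo_{\lgk{k}}$ reads $\forall G \exists f \forall n \exists m (\lpo(f)=n \to \lgk{k}(G)=m)$ and $S^{\lgk{k}}_\lpo$ reads $\forall f \exists G \forall n \exists m (\lgk{k}(G)=n \to \lpo(f)=m)$.

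For $S^\lpo_{\lgk{k}}$, which formalizes $\lgk{k}\wlt\lpo$, given a graph $G$ I would take $f(j)=c(G,j)$, so that $f(j)=0$ exactly when $G_j$ has no $k$-coloring. Since $G_{j-1}$ is a subgraph of $G_j$, any $k$-coloring of $G_j$ restricts to one of $G_{j-1}$, so colorability of the initial subgraphs is monotone and the first zero of $f$ sits at the least $m$ with $G_m$ not $k$-colorable. I would define the decoding $\Psi$ by $\Psi(0)=0$ and $\Psi(n)=n-1$ for $n>0$, and verify the two cases: if $\lpo(f)=0$ then every $f(j)>0$, so every $G_m$ is $k$-colorable and $\lgk{k}(G)=0$; if $\lpo(f)=n>0$ then $f(n-1)=0$ and $f(t)>0$ for $t<n-1$, so $G_{n-1}$ has no $k$-coloring while $G_{n-2}$ does, giving $\lgk{k}(G)=n-1=\Psi(n)$.

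For $S^{\lgk{k}}_\lpo$, which formalizes $\lpo\wlt\lgk{k}$, I would build $G$ as a disjoint union of blocks, reserving for each $j$ the $k+1$ vertices $(k+1)j,\dots,(k+1)j+k$: if $f(j)=0$ I place all edges inside that block, making it a $(k+1)$-clique, and if $f(j)\ne0$ I leave the block edge-free, putting no edges between distinct blocks. Because a disjoint union of cliques has chromatic number equal to its largest clique, $G_m$ is $k$-colorable precisely when no block with $f(j)=0$ lies entirely in $\{0,\dots,m\}$; hence the least $m$ with no $k$-coloring is $(k+1)j_0+k$, where $j_0$ is the least zero of $f$. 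I would then set $\Psi(0)=0$ and $\Psi(n)=(n-k)/(k+1)+1$ for $n>0$, and argue by cases: $\lgk{k}(G)=0$ forces $f$ to have no zero and $\lpo(f)=0$, while the $\exists$-free hypothesis $\lgk{k}(G)=n>0$ supplies $c(G,n)=0$ and $c(G,t)=1$ for $t<n$, from which decidable arithmetic identifies $n$ as exactly the vertex completing the first clique, so $n=(k+1)j_0+k$ and $\lpo(f)=j_0+1=\Psi(n)$.

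The main obstacle I anticipate is in this second direction: aligning the block indexing so that the single value $\lgk{k}(G)$ decodes unambiguously to the $\lpo$ output, and confirming the chromatic-number characterization not merely at the critical vertex but at every initial segment, so that $c(G,t)=1$ holds for all $t$ strictly below the first obstruction. Care is also needed to keep the verification intuitionistic: rather than deciding whether $f$ has a zero, I would fix $n$, assume $\lgk{k}(G)=n$, and derive $\lpo(f)=\Psi(n)$ from the assumed values of $c(G,\cdot)$, so that inputs $n$ for which $\lgk{k}(G)=n$ fails make the implication vacuously true with no case split. Finally, since the matrices involved are $\exists$-free, each $S$-formula lies in $\Gamma_1$, so by Theorem~\ref{hm1} these derivations simultaneously yield the Weihrauch reductions $\lpo\wlt\lgk{k}$ and $\lgk{k}\wlt\lpo$ in $i\rcaw$.
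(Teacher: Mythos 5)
Your proposal is correct and follows essentially the same strategy as the paper: for $S^{\lpo}_{\lgk{k}}$ you use the colorability indicator $c(G,\cdot)$ as the $\lpo$ instance and decode by truncated subtraction, exactly as the paper does, and for $S^{\lgk{k}}_{\lpo}$ you encode each zero of the input function as a $(k+1)$-clique whose completion pins down the first non-colorable initial subgraph. The only (cosmetic) difference is that you place the cliques in disjoint blocks of vertices $(k+1)j,\dots,(k+1)j+k$, whereas the paper places the clique for a zero at position $m$ directly on vertices $m,\dots,m+k$, which yields the slightly simpler decoding $m\mapsto m-k+1$ in place of your division by $k+1$.
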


\begin{proof}
Because both $\lpo$ and $\lgk{k}$ are total, we can use the simple form of the
reduction prenex formula.  Thus $S^{\lgk{k}}_\lpo$ is
\[
\forall p \exists G \forall y \exists v (\lgk{k}(G) = y \to \lpo(p) = v).
\]
To prove this in $i\rcaw$, fix an instance $p$ of $\lpo$.  Define the graph $G$ as follows.
For each $m$, if $p(m) =0$, add the edges
$\{(s,t) \mid m\le s< t \le m+k\}$ to $G$.  These edges form a complete subgraph on
$k+1$ vertices, precluding any $k$-coloring of $G_{m+k}$.  If $p(m) \neq 0$, add no new edges
to $G$.  By this construction, if $\lgk{k}(G) = 0$, then for all $m$, $p(m) >0$, so $\lpo (p) = 0$.
On the other hand, if $\lgk{k}(G) = m >0$, then $m=n+k$ for some $n\ge 0$, and $p(n)=0$,
so $\lpo(p)=n+1=m-k+1$.

To show that $S^{\lpo}_{\lgk{k}}$, fix a graph $G$.  Define an instance $p$ of $\lpo$ as follows.
For each $m$, if the subgraph $G_m$ has a $k$-coloring, let $p(m) = 1$.  If $G_m$ has no $k$-coloring,
let $p(m) = 0$.  The truncated subtraction $\lpo (p)\dotminus 1$ yields a correct output for $\lgk{k}(G)$.
\end{proof}

By formalizing the transitivity of Weihrauch reduction we can extract additional results from the preceding lemma.

\begin{lemma}\label{TRANS}
$(i\rcaw )$  If $ {\sf P} \wlt {\sf Q}$ and ${\sf Q} \wlt {\sf R}$, then ${\sf P} \wlt {\sf R}$.
\end{lemma}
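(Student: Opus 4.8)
The plan is to prove the transitivity of Weihrauch reduction by composing the Skolem functionals. Recall from the discussion following the definition that ${\sf P} \wlt {\sf Q}$ is formalized in $i\rcaw$ as the assertion that there exist functionals $\Phi_1, \Psi_1$ (of type $1\to1$) witnessing the reduction, and similarly ${\sf Q} \wlt {\sf R}$ gives functionals $\Phi_2, \Psi_2$. The goal is to construct functionals $\Phi, \Psi$ witnessing ${\sf P} \wlt {\sf R}$ directly from these. Since $i\rcaw$ proves the existence of computable functionals and is closed under composition of functionals, the natural candidates are $\Phi(u) = \Phi_2(\Phi_1(u))$ and $\Psi(y) = \Psi_1(\Psi_2(y))$, with the composition running in opposite directions on the input and output sides (the forward maps compose as $\Phi_2 \circ \Phi_1$ while the backward solution-transport maps compose as $\Psi_1 \circ \Psi_2$).

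First I would write out the three reductions in the explicit $i\rcaw$ form. Using the notation of the paper, let ${\sf P}$, ${\sf Q}$, ${\sf R}$ have input/solution predicates $(p_1,p_2)$, $(q_1,q_2)$, $(r_1,r_2)$. Then ${\sf P}\wlt{\sf Q}$ asserts $\forall u \forall y\,(p_1(u)\to(q_1(\Phi_1(u))\land(q_2(\Phi_1(u),y)\to p_2(u,\Psi_1(y)))))$, and ${\sf Q}\wlt{\sf R}$ asserts $\forall u\forall y\,(q_1(u)\to(r_1(\Phi_2(u))\land(r_2(\Phi_2(u),y)\to q_2(u,\Psi_2(y)))))$. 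Next I would verify the composed statement by a direct intuitionistic deduction: fix $u$ with $p_1(u)$, and fix an arbitrary $y$. From the first reduction, $p_1(u)$ yields $q_1(\Phi_1(u))$; feeding $\Phi_1(u)$ as the input to the second reduction gives $r_1(\Phi_2(\Phi_1(u)))$, which is the required $r_1(\Phi(u))$. For the solution-transport clause, assume $r_2(\Phi(u),y)$, i.e. $r_2(\Phi_2(\Phi_1(u)),y)$; applying the second reduction at input $\Phi_1(u)$ and solution $y$ (its hypothesis $q_1(\Phi_1(u))$ already established) yields $q_2(\Phi_1(u),\Psi_2(y))$, and then the first reduction applied at $u$ and solution $\Psi_2(y)$ yields $p_2(u,\Psi_1(\Psi_2(y))) = p_2(u,\Psi(y))$, as required.

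The remaining point is that $i\rcaw$ can actually \emph{form} the composite functionals $\Phi$ and $\Psi$ as objects of type $1\to1$, since transitivity is asserted as an existential claim about such functionals rather than merely a schematic implication. Here I would appeal to the fact, noted in the paper's description of $i\rcaw$, that the system proves the existence of computable functionals and is closed under functional composition; composition of two type-$(1\to1)$ functionals is itself definable by a term of the higher-order language, so the witnesses are available internally. The entire verification above is an intuitionistic propositional and first-order manipulation applied to the universally quantified matrix, so no classical logic or choice beyond the given Skolem functionals is needed.

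The step I expect to be the main obstacle is the bookkeeping on the direction of composition: because a solution of the outer problem is transported \emph{backward} through the reductions, the output functionals compose in the reverse order from the input functionals, and one must be careful that the quantifier over the intermediate solution $y$ in the second reduction is instantiated correctly so that the hypotheses $q_1(\Phi_1(u))$ and $p_1(u)$ are discharged in the right sequence. Everything else is routine, so I would keep the proof to a short explicit chain of implications.
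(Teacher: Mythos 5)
Your proof is correct and takes essentially the same approach as the paper: the paper's own proof is just the terse remark that this is the usual transitivity argument, which formalizes because $i\rcaw$ proves that compositions of given functionals exist. You have simply written out the details left implicit there --- composing $\Phi_2 \circ \Phi_1$ on the input side and $\Psi_1 \circ \Psi_2$ (in reversed order) on the solution side, which works cleanly under this paper's convention that $\Psi$ depends only on the solution $y$ --- and verified the matrix intuitionistically.
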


\begin{proof}
This is a formalization of the well-known property for Weihrauch reductions.  The
system $i\rcaw$ can prove that compositions of given functionals exist, so the usual
proof holds.
\end{proof}

\begin{theorem}\label{LGT}
$(i\rcaw )$  For any $j \ge k \ge 1$, $\lpo \weq \lgk{k} \weq \lgk{j}$.
\end{theorem}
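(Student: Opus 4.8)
The plan is to obtain the theorem almost immediately from the three results already in hand: Lemma \ref{LGL} supplies the two reduction prenex formulas, Theorem \ref{hm1} converts provability of those formulas into provability of the corresponding Weihrauch reductions, and Lemma \ref{TRANS} then lets us connect $\lgk{k}$ and $\lgk{j}$ through $\lpo$. No new construction is needed; the work is a matter of correctly invoking the machinery.

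First I would check the hypothesis of Theorem \ref{hm1}. Both $\lpo$ and $\lgk{k}$ are total problems, and the displayed formalizations of $\lpo(p)=n$ and $\lgk{k}(G)=n$ exhibit their underlying predicates as $\exists$-free. By the remark following the definition of the reduction prenex formula, totality together with $\exists$-free constituents guarantees that $S^{\lgk{k}}_\lpo$ and $S^\lpo_{\lgk{k}}$ are both in $\Gamma_1$, so Theorem \ref{hm1} applies to each. Lemma \ref{LGL} gives $i\rcaw \vdash S^{\lgk{k}}_\lpo$ and $i\rcaw \vdash S^\lpo_{\lgk{k}}$; since $S^{\sf P}_{\sf Q}$ is the reduction formula for ${\sf Q}\wlt{\sf P}$, applying Theorem \ref{hm1} to the former yields $i\rcaw\vdash \lpo \wlt \lgk{k}$ and to the latter yields $i\rcaw\vdash \lgk{k} \wlt \lpo$. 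Hence $\lpo \weq \lgk{k}$. Because $k$ here is an arbitrary index at least $1$, the identical argument with $j$ in place of $k$ gives $\lpo \weq \lgk{j}$, so the restriction $j \ge k$ in the statement plays no essential role.

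Finally I would close the triangle using Lemma \ref{TRANS}: combining $\lgk{k} \wlt \lpo$ with $\lpo \wlt \lgk{j}$ gives $\lgk{k} \wlt \lgk{j}$, and combining $\lgk{j} \wlt \lpo$ with $\lpo \wlt \lgk{k}$ gives $\lgk{j} \wlt \lgk{k}$, whence $\lgk{k} \weq \lgk{j}$ and the full chain $\lpo \weq \lgk{k} \weq \lgk{j}$ follows. I do not expect a genuine obstacle, since the substantive content already resides in the graph constructions of Lemma \ref{LGL}. The only points demanding care are the bookkeeping of each reduction's direction against the sub/superscript convention of $S^{\sf P}_{\sf Q}$, and verifying the $\exists$-free (hence $\Gamma_1$) hypothesis so that Theorem \ref{hm1} may be legitimately applied to pass from the prenex formulas to the reductions themselves.
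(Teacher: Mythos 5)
Your proposal is correct and follows essentially the same route as the paper's own proof: verify that the $\exists$-free formalizations place $S^{\lgk{k}}_\lpo$ and $S^{\lpo}_{\lgk{k}}$ in $\Gamma_1$, apply Theorem~\ref{hm1} to Lemma~\ref{LGL} to obtain $\lpo \weq \lgk{k}$, and then use Lemma~\ref{TRANS} to conclude $\lgk{k} \weq \lgk{j}$. Your additional care with the sub/superscript direction convention and the observation that $j \ge k$ is inessential are both accurate and consistent with the paper.
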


\begin{proof}
By our formalization of $\lpo$ and $\lgk{k}$, the corresponding predicates are $\exists$-free.
Consequently, both $S^{\lgk{k}}_\lpo$ and $S^{\lpo}_{\lgk{k}}$ are in $\Gamma_1$.
Applying Theorem~\ref{hm1} to the implications of Lemma~\ref{LGL} yields proofs in
$i\rcaw$ that $\lpo \wlt \lgk{k}$, $\lgk{k} \wlt \lpo$, and so $\lpo \weq \lgk{k}$.
This holds for all $j$ and $k$, so by Lemma \ref{TRANS} we have $\lgk{k} \weq \lgk{j}$.
\end{proof}

One noteworthy consequence of the preceding theorem is the provability of the Weihrauch equivalence
of $\lgk{2}$ and $\lgk{3}$ in $i\rcaw$.  The finite combinatorial analogs of these problems are
2-colorability and 3-colorability of finite graphs, which are respectively polynomial time computable
and NP complete.  As was the case in the traditional reverse mathematics and computability
theoretic analysis of Hirst and Lempp \cite{hl}, formal Weihrauch
analysis of the related infinite problems does not distinguish between these problems.
The current setting does allow us to apply techniques of Weihrauch analysis, like the
application of Lemma~\ref{TRANS} in the preceding proof to obtain formal equivalences,
and then to translate these into proofs of implications in weak subsystems, as in the following
corollary.

\begin{corollary}\label{LGC}
$(i\rcaw)$  For every pair of problems $\sf P$ and $\sf Q$ from the list in Theorem \ref{LGT},
$S^{\sf P}_{\sf Q}$ holds.  Furthermore, the implication ${\sf P}\to {\sf Q}$ holds.
\end{corollary}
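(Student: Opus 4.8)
The plan is to route the outputs of Theorem~\ref{LGT} and Theorem~\ref{hm1} through the intuitionistic implication recorded just after the definition of the reduction prenex formula. First I would observe that every problem in the list---namely $\lpo$ together with $\lgk{k}$ for each $k \ge 1$---has an $\exists$-free underlying predicate, exactly as verified in the formalizations preceding Lemma~\ref{LGL}. It follows that for any pair $\sf P$ and $\sf Q$ drawn from the list, the matrix of $S^{\sf P}_{\sf Q}$ is assembled from $\exists$-free constituents using only $\land$, $\to$, and universal quantification, so $S^{\sf P}_{\sf Q}$ itself lies in $\Gamma_1$.

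Next I would use that Theorem~\ref{LGT} makes every problem in the list pairwise Weihrauch equivalent in $i\rcaw$; in particular $i\rcaw \vdash {\sf Q} \wlt {\sf P}$ for every such pair. Applying Theorem~\ref{hm1} in the direction that converts a Weihrauch reduction into its reduction prenex formula---legitimate precisely because $S^{\sf P}_{\sf Q} \in \Gamma_1$---then yields $i\rcaw \vdash S^{\sf P}_{\sf Q}$, which is the first assertion of the corollary. For the second assertion I would invoke the fact noted earlier that $S^{\sf P}_{\sf Q} \to ({\sf P} \to {\sf Q})$ is provable in intuitionistic predicate calculus; combined with the just-derived $S^{\sf P}_{\sf Q}$, this gives ${\sf P} \to {\sf Q}$ in $i\rcaw$. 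Because my generic pair is ordered, working with ${\sf Q} \wlt {\sf P}$ covers every ordered pair uniformly, so both $S^{\sf P}_{\sf Q}$ and $S^{\sf Q}_{\sf P}$ are obtained by symmetry of the hypothesis.

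No serious obstacle arises, as the argument amounts to assembling previously established results rather than proving anything genuinely new. The only point warranting care is the uniform verification that $\exists$-freeness persists for every choice of $\sf P$ and $\sf Q$, but this is immediate from the syntactic closure of $\exists$-free formulas under $\land$, $\to$, and $\forall$ together with the fixed shape of the reduction prenex formula. I therefore expect the write-up to be short, with its substance consisting entirely in citing Theorem~\ref{LGT}, Theorem~\ref{hm1}, and the intuitionistic implication in the correct order.
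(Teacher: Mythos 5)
Your proposal is correct and follows essentially the same route as the paper's proof: note that the $\exists$-free formalizations put each $S^{\sf P}_{\sf Q}$ in $\Gamma_1$, apply the reverse direction of Theorem~\ref{hm1} to the reductions supplied by Theorem~\ref{LGT}, and then use the intuitionistically provable implication $S^{\sf P}_{\sf Q} \to ({\sf P} \to {\sf Q})$ for the final claim. The only cosmetic difference is that the paper additionally remarks that the problems are total and expressible as $\Pi^1_1$ formulas before invoking that implication, a point your write-up leaves implicit.
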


\begin{proof}
As noted above, the formulas $S^{\sf P}_{\sf Q}$ of Theorem~\ref{LGT}
are in $\Gamma_1$.  Applying the reverse implication Theorem~\ref{hm1} to the equivalences of Theorem~\ref{LGT}
proves  $S^{\sf P}_{\sf Q}$.  To justify the final sentence, note that these are total problems and can be written
as $\Pi^1_1$ formulas.  Over intuitionistic predicate calculus, $S^{\sf P}_{\sf Q}$ implies ${\sf P}\to {\sf Q}$.
\end{proof}

If $\sf P$ is a problem, the parallelization of $\sf P$, denoted by $\widehat{\sf P}$ is the problem
that accepts as input any infinite sequence of inputs for $\sf P$, and outputs the associated infinite
sequence of solutions for the input instances.  The following lemma is a formalization of a portion
of part 3
of Proposition 3.6 of Brattka, Gherardi, and Pauly \cite{bgp}.

\begin{lemma}\label{HAT}
$( i\rcaw )$  ${\sf P} \wlt {\sf Q}$ implies $\widehat{\sf{P}}\wlt \widehat{\sf{Q}}$.
\end{lemma}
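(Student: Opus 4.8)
The plan is to unfold the hypothesis into explicit reduction functionals, assemble their coordinatewise versions, and verify the conclusion one component at a time. First I would unfold the formalized hypothesis: the statement ${\sf P}\wlt{\sf Q}$ asserts in $i\rcaw$ the existence of functionals $\Phi$ and $\Psi$ of type $1\to1$ satisfying
\[
\forall u\,\forall y\,\bigl(p_1(u)\to\bigl(q_1(\Phi(u))\land\bigl(q_2(\Phi(u),y)\to p_2(u,\Psi(y))\bigr)\bigr)\bigr),
\]
where $u$ is a ${\sf P}$-input, $\Phi(u)$ is the associated ${\sf Q}$-input, $y$ is a ${\sf Q}$-solution, and $\Psi(y)$ is a ${\sf P}$-solution. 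Next I would pin down the representation of the parallelized problems. I would code a sequence of ${\sf P}$-inputs as a single type-1 object $U$ using a fixed primitive recursive pairing, writing $(U)_i$ for the $i$-th component determined by $(U)_i(n)=U(\seq{i,n})$. The input predicate of $\widehat{\sf P}$ is then $\widehat p_1(U)\equiv\forall i\,p_1((U)_i)$ and its solution predicate is $\widehat p_2(U,W)\equiv\forall i\,p_2((U)_i,(W)_i)$, with the analogous clauses for $\widehat{\sf Q}$; note that these remain $\exists$-free whenever $p_1,p_2,q_1,q_2$ are, since a universal quantifier over the index preserves $\exists$-freeness.

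I would then build the witnessing functionals for $\widehat{\sf P}\wlt\widehat{\sf Q}$ by applying $\Phi$ and $\Psi$ slicewise. Specifically, I would define $\widehat\Phi$ and $\widehat\Psi$ so that $(\widehat\Phi(U))_i=\Phi((U)_i)$ and $(\widehat\Psi(Y))_i=\Psi((Y)_i)$ for every $i$. Each of these is a term assembled from $\Phi$, $\Psi$, the pairing and projection maps, and composition, so $i\rcaw$ proves that $\widehat\Phi$ and $\widehat\Psi$ exist as type $1\to1$ functionals, by exactly the closure under composition that justified Lemma~\ref{TRANS}.

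The verification would proceed coordinatewise. Assuming $\widehat p_1(U)$, the component property of $\Phi$ applied at each index yields $q_1(\Phi((U)_i))$ for all $i$, which is precisely $\widehat q_1(\widehat\Phi(U))$; assuming in addition $\widehat q_2(\widehat\Phi(U),Y)$, the component property of $\Psi$ yields $p_2((U)_i,\Psi((Y)_i))$ for all $i$, that is, $\widehat p_2(U,\widehat\Psi(Y))$. Together these give the $i\rcaw$-translation of $\widehat{\sf P}\wlt\widehat{\sf Q}$. The computational content is trivial, so the one genuine obstacle I expect is the bookkeeping: arranging the coding of a sequence of type-1 inputs as a single type-1 object and confirming that $i\rcaw$ proves the assembled functionals exist while keeping everything at type $1\to1$. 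Once that coding is in place, the remaining verification is a routine intuitionistic argument in which the universal quantifier over the coordinate index commutes with each step.
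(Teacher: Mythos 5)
Your proposal is correct and follows essentially the same route as the paper's proof: take the witnessing functionals $\Phi$ and $\Psi$, apply them slicewise to the coded sequence of inputs, observe that $i\rcaw$ proves the existence of the assembled functionals, and verify the reduction coordinatewise. The paper compresses the coding and the coordinatewise verification into a single sentence, so your write-up is simply a more detailed rendering of the same argument.
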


\begin{proof}
Suppose that $\Phi$ and $\Psi$ are the functionals witnessing ${\sf P} \wlt {\sf Q}$.
If $\vec u = \langle u_i \rangle_{i \in \nat}$ is an input for $\widehat{\sf Q}$, define
$\hat \Phi$ by $\hat \Phi (\vec u) = \langle \Phi (u_i) \rangle_{i \in \nat}$.  Define
$\hat \Psi$ similarly.  Note that $i\rcaw$ suffices to prove the existence of $\hat\Phi$
and $\hat \Psi$, and also proves that they witness $\widehat{\sf{P}}\wlt \widehat{\sf{Q}}$.
\end{proof}

\begin{theorem}\label{GHW}
$( i\rcaw )$  For every $k \ge 1$, $\widehat\lpo \weq \widehat{\lgk{k}}$.
\end{theorem}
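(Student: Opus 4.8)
The plan is to obtain this equivalence as an immediate consequence of the Weihrauch equivalence $\lpo \weq \lgk{k}$ already secured in Theorem~\ref{LGT}, combined with the fact that parallelization is monotone with respect to Weihrauch reduction, which is exactly Lemma~\ref{HAT}. Since $\weq$ abbreviates reducibility in both directions, the work amounts to applying Lemma~\ref{HAT} to each direction and then recombining.

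Concretely, first I would unpack the equivalence supplied by Theorem~\ref{LGT} into its two constituent reductions, namely $\lpo \wlt \lgk{k}$ and $\lgk{k} \wlt \lpo$, each provable in $i\rcaw$. Next I would invoke Lemma~\ref{HAT} twice. Applying it to the reduction $\lpo \wlt \lgk{k}$ yields $\widehat\lpo \wlt \widehat{\lgk{k}}$, and applying it to $\lgk{k} \wlt \lpo$ yields $\widehat{\lgk{k}} \wlt \widehat\lpo$. Finally, reading these two reductions together gives the desired $\widehat\lpo \weq \widehat{\lgk{k}}$, with the whole argument carried out inside $i\rcaw$.

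I do not expect a genuine obstacle here: the mathematical content has been front-loaded into Theorem~\ref{LGT}, whose proof routes through the formalized reduction prenex formulas and the proof-mining transfer of Theorem~\ref{hm1}, and into Lemma~\ref{HAT}, which already certifies that $i\rcaw$ proves the existence of the parallelized functionals $\hat\Phi$ and $\hat\Psi$. The only point meriting a moment's care is verifying that the hypothesis of Lemma~\ref{HAT}, a single Weihrauch reduction provable in $i\rcaw$, is met for \emph{both} directions, and this is precisely what the equivalence in Theorem~\ref{LGT} guarantees for each fixed $k \ge 1$.
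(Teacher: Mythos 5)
Your proposal is correct and matches the paper's proof exactly: the paper's entire argument is ``Apply Lemma~\ref{HAT} to Theorem~\ref{LGT},'' which is precisely your strategy of splitting the equivalence of Theorem~\ref{LGT} into its two constituent reductions, applying the parallelization monotonicity of Lemma~\ref{HAT} to each, and recombining. You have simply written out the details the paper leaves implicit.
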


\begin{proof}
Apply Lemma \ref{HAT} to Theorem \ref{LGT}.
\end{proof}

If $\langle f_i \rangle_{i \in \nat}$ is a sequence of input functions for $\widehat\lpo$, we can
define a type 1 function on (codes of pairs of) natural numbers by $f(i,n) = f_i(n)$ and think of $f$ as a type 1 input for $\widehat \lpo$.  This modification
allows us to think of $\widehat \lpo$ as a $\Pi^1_2$ formula of second order arithmetic.
Similarly, we will conflate $\widehat {\lgk{k}}$ with its second order analog.

\begin{corollary}\label{GHRM}
$(i\rca )$ The following are equivalent:
\begin{enumerate}
\item $\aca$. \label{GHRM1}
\item $\widehat{\lpo}$\label{GHRM2}
\item $\widehat{\lgk{k}}$, where $k\ge 1$.\label{GHRM3}
\end{enumerate}
\end{corollary}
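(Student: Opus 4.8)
The plan is to establish the equivalences in $i\rca$ (equivalently $\rca$, since these are second-order formulas and the conservation result of Theorem~\ref{cons} lets us move freely between the two). The strategy is to prove the cycle $\eqref{GHRM1} \to \eqref{GHRM3} \to \eqref{GHRM2} \to \eqref{GHRM1}$, leveraging the machinery already built up. The cleanest route uses Theorem~\ref{GHW}, which gives $\widehat{\lpo} \weq \widehat{\lgk{k}}$ provably in $i\rcaw$; by the transfer from Weihrauch equivalence to implication (as in Corollary~\ref{LGC}, using that these parallelized problems are total and expressible as $\Pi^1_2$ formulas via the conflation described just before this corollary), the Weihrauch equivalence yields that $\widehat{\lpo}$ and $\widehat{\lgk{k}}$ imply one another over $i\rca$. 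This collapses the task to showing a single one of them equivalent to $\aca$.

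First I would prove $\eqref{GHRM1} \to \eqref{GHRM2}$. Given $\aca$, an input to $\widehat{\lpo}$ is a sequence $\langle f_i \rangle_{i \in \nat}$, and for each $i$ the value $\lpo(f_i)$ asks whether $\exists k\, (f_i(k) = 0)$ and, if so, locates the first such $k$. The set $\{ i \mid \exists k\, (f_i(k)=0) \}$ is $\Sigma^0_1$, so $\aca$ produces it; from this set one recovers the full sequence of outputs $\langle \lpo(f_i) \rangle_{i \in \nat}$ by bounded search for the witnessing $k$ in each coordinate where one exists. This is the standard verification that parallelized omniscience computes the jump.

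The reverse direction $\eqref{GHRM2} \to \eqref{GHRM1}$ is the heart of the argument and the step I expect to be the main obstacle. Here one shows $\widehat{\lpo}$ implies $\aca$, which amounts to showing that $\widehat{\lpo}$ computes the Turing jump of an arbitrary set. Given a set $X$, I would build a sequence $\langle f_i \rangle_{i \in \nat}$ so that $f_i$ searches for a halting computation witnessing $i \in X'$: set $f_i(k) = 0$ exactly when the computation $\{i\}^X(i)$ halts in $k$ steps, and $f_i(k) > 0$ otherwise. Then $\lpo(f_i) = 0$ iff $i \notin X'$, so the output sequence of $\widehat{\lpo}$ decides membership in $X'$, and $\aca$ follows from the standard equivalence between arithmetical comprehension and the existence of the jump. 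The delicate points are arranging the encoding uniformly and primitive-recursively in $X$ so that the construction goes through in $\rca$, and confirming that the $\Pi^1_2$ conflation of $\widehat{\lpo}$ supplies exactly the sequence of outputs needed; both are routine once the coding is fixed. Finally, $\eqref{GHRM3}$ is folded in through the $i\rcaw$-provable equivalence with $\widehat{\lpo}$ noted in the first paragraph, completing the cycle for every $k \ge 1$.
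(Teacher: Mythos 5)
Your treatment of the equivalence of (\ref{GHRM2}) and (\ref{GHRM3}) (via Theorem~\ref{GHW}, Theorem~\ref{hm1}, and the transfer to implications) and your proof of (\ref{GHRM1})$\to$(\ref{GHRM2}) essentially coincide with the paper's; indeed, applying comprehension to the $\Sigma^0_1$ formula $\exists k\, (f_i(k)=0)$ as you do is intuitionistically clean. The genuine gap is in your framing and in the reversal. Your opening parenthetical --- that working in $i\rca$ is ``equivalently'' working in $\rca$ because of Theorem~\ref{cons} --- misreads that theorem. Theorem~\ref{cons} says that the \emph{higher-order intuitionistic} system $i\rcaw$ is conservative over the \emph{second-order intuitionistic} system $i\rca$; it is used (both in the paper and implicitly by you) to push the $i\rcaw$-provable Weihrauch results down to $i\rca$. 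It says nothing relating classical $\rca$ to $i\rca$, and no such transfer is available: a proof in classical $\rca$ does not yield a proof in $i\rca$. So your plan to verify the delicate steps ``in $\rca$'' cannot establish the corollary as stated.

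This error is load-bearing exactly where you locate the heart of the argument, the direction (\ref{GHRM2})$\to$(\ref{GHRM1}). You delegate the final step to ``the standard equivalence between arithmetical comprehension and the existence of the jump,'' which is a classical theorem; its usual proof runs through Simpson's Lemma~III.1.3, whose reversal uses bounded $\Sigma^0_1$ comprehension (Lemma~II.3.7) and hence excluded middle. Avoiding precisely this use of classical logic is the actual content of the paper's proof: it observes that over $i\rca$ it suffices to obtain the range of any function that is injective \emph{except for possibly taking the value $0$ repeatedly} (the ``almost injective'' relaxation is what makes the enumeration-function argument intuitionistic), and then, for such an $f$, feeds the instance $f_i(m)=1 \iff \forall t<m\,(f(t)\neq i)$ to $\widehat{\lpo}$ to get the characteristic function of the range. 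Your halting-computation instance $\langle f_i\rangle$ is a perfectly good uniform construction and could likely be adapted, but as written the passage from ``$\widehat{\lpo}$ decides every jump'' to full $\aca$ over $i\rca$ is an unproven classical black box, so the intuitionistic claim --- which is the point of the corollary --- is not established.
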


\begin{proof}
The equivalence of parts (\ref{GHRM2}) and (\ref{GHRM3}) follows from
the application of Theorem \ref{hm1} to the Weihrauch reductions included in Theorem \ref{GHW}, and the
fact that $S^{\sf P}_{\sf Q} \to ({\sf P} \to {\sf Q})$.  To show that
(\ref{GHRM1}) implies (\ref{GHRM2}), let $\langle f_i \rangle_{i \in \nat}$ be
an input for $\widehat{\lpo}$.  Arithmetical comprehension asserts the
existence of the set $\{ i \mid \forall n \, f_i(n) \neq 0 \}$ and the
corresponding characteristic function, which is a solution of the instance of $\widehat{\lpo}$.

To show that (\ref{GHRM2}) implies (\ref{GHRM1}), we adapt the familiar
application of Lemma III.1.3 of Simpson \cite{simpson}.  In $i\rca$ it suffices to
find the range of an arbitrary function that is injective, except for possibly repeatedly
taking the value $0$.  (This broader class of almost injective functions avoids a use of classical
logic in the proof of Lemma II.3.7, used by Simpson \cite{simpson} to prove the reversal.)
Let $f$ be such a function.
Define the sequence of functions $\langle f_i \rangle_{i \in \nat}$ by
$f_i(m) = 1$ if $\forall t< m (f(t) \neq i)$ and $f_i (m) = 0 $ otherwise.
The solution to the $\widehat{\lpo}$ problem $\langle f_i \rangle_{i \in \nat}$
is the characteristic function of the range of $f$.

Finally, the formulas of Corollary~\ref{GHRM} are all second order, so by Theorem~\ref{cons}
the equivalences are provable in the second order system $i\rca$.
\end{proof}

Summarizing, we used higher order formalized Weihrauch techniques in Lemma \ref{HAT}
and extracted traditional second order reverse mathematics results in Corollary \ref{GHRM}.

\subsection{Colorings as outputs}

We can reformulate the graph coloring problems so that the outputs are infinite graph colorings.
We will continue to use the notation of the previous subsection.  In particular, $G_m$ denotes the finite
subgraph of $G$ consisting of the first $m+1$ vertices and their edges in $G$.

\begin{dGCk}
($k$-coloring of graphs):
If $e:\nat \to \nat$ codes the edge set for a graph $G$ and every finite
subgraph $G_m$ has a $k$-coloring, then there is a function $f: \nat \to k$
that is a $k$-coloring of $G$.
\end{dGCk}

The predicate asserting that $G_m$ is $k$-colorable can be written using bounded quantifiers, with
$G$, $k$, and $m$ as parameters.  The system $i\rcaw$ proves the existence of the primitive recursive
characteristic functional for this predicate, so the assertion that every $G_m$ has a $k$-coloring can
be formalized with a $\exists$-free formula.  Note that $f$ is a $k$-coloring of $G$ if and only if
for each $i \neq j$, if $(i,j)$ is an edge in $G$ then $f(i)\neq f(j)$, which is also $\exists$-free.
Thus we can write an $\exists$-free formula ${\sf{GC}}k(G,f)$ asserting that $f$ is the solution
to the ${\sf{GC}}k$ problem for $G$.

In our study of problems involving infinite trees, we will use the the following encoding.
Fix a suitable bijective coding function $\scd$ mapping $\nat$ onto $2^{<\nat}$, the set
of finite sequences of zeros and ones.  Any function $t: \nat \to \nat$ can be viewed as
a code for a binary tree $T$ using the following convention.  The finite sequence $\scd (n)$ is
in $T$ if and only if $t(n)>0$ and for every initial segment $\sigma$ of $\scd (n)$,
$t(\scd^{-1} (\sigma) )>0$.  Informally, the function $t$ is the characteristic function for a
set that includes the sequences of $T$ and omits the immediate successors of any leaves of $T$.
Using this encoding, we have the following formalization of Weak K\"o{}nigs's Lemma as a total
problem.

\begin{dWKL}
(Weak K\"o{}nig's Lemma):
If $T$ is a binary tree and for every $m$ there is a sequence $\sigma \in T$ of length $m$,
then there is a function $p$ that codes an infinite path through $T$.
\end{dWKL}

In the previous formalization, $p$ can be viewed as
a code for a subtree $P$ of $T$ such that each node in $P$ has precisely one immediate successor.
Using our standard techniques for representing primitive recursive functionals, it is easy to
formalize the assertion that for every $m$ the tree $T$ contains a binary sequence of length
$m$ as an $\exists$-free formula.  Note that the bound on the labels is essential for this.
The claim that $p$ is an infinite path is a universal statement, so we can write an $\exists$-free
formula ${\sf{WKL}}(T,f)$ asserting that $f$ is a solution to the $\sf{WKL}$ problem for $T$.

For values of $n$ greater than $2$, we can use a bijective coding $\scd _n$ of sequences of numbers
less than $n$, and formulate a corresponding version of $\wkl$.

\begin{dWKn}
(Weak K\"o{}nig's Lemma for trees in $n^{<\nat}$):
If $T$ is a tree with nodes labeled with numbers less than $n$,
and for every $m$ there is a sequence $\sigma \in T$ of length $m$,
then there is a function $p$ that codes an infinite path through $T$. 
\end{dWKn}

Note that $\wkl$ and $\wkn{2}$ denote the same problem.  As with $\wkl$, we can formalize
$\wkn{2}$ in the language of $i\rcaw$ with an $\exists$-free formula.

\begin{lemma}\label{GCL0}
$(i\rca)$ For each $n\ge 2$,  $S^{\wkl}_{\wkl n}$ and $S^{\wkl n}_ {\wkl}$ hold.
\end{lemma}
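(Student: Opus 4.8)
The plan is to establish each reduction prenex formula by exhibiting an explicit primitive recursive construction of the reduced instance together with a primitive recursive decoding of its solutions. Since both $\wkl$ and $\wkn{n}$ are formalized as total problems, I will use the total form of the reduction prenex formula, so that $S^{\wkl}_{\wkn{n}}$ reads $\forall u \exists x \forall y \exists v ({\sf WKL}(x,y) \to \wkn{n}(u,v))$ and $S^{\wkn{n}}_{\wkl}$ reads $\forall u \exists x \forall y \exists v (\wkn{n}(x,y) \to {\sf WKL}(u,v))$. In each direction the witness $x$ is $\Delta^0_1$-definable from $u$ and the witness $v$ is primitive recursive in $y$ and $u$, so the comprehension needed is available in $i\rca$; because the witnesses are produced explicitly, the verifications go through intuitionistically.

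For $S^{\wkn{n}}_{\wkl}$, which formalizes the reduction of $\wkl$ to $\wkn{n}$, I would use the trivial inclusion of binary trees among $n$-ary trees. Given a binary tree $u$ (coded via $\scd$), let $x$ be the same tree reindexed through the $n$-ary coding $\scd_n$; this is primitive recursive in $u$ because $n \ge 2$ means every binary string is a string over $\{0,\dots,n-1\}$. If $u$ is infinite then $x$ is infinite, and any path $y$ through $x$ visits only nodes of $x$, all of which are binary strings, so reindexing $y$ back through $\scd$ yields a path $v$ through $u$. When the infinitude hypothesis on $u$ fails, the implication $\wkn{n}(x,y)\to{\sf WKL}(u,v)$ holds vacuously, so the formula is verified.

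For $S^{\wkl}_{\wkn{n}}$, which formalizes the reduction of $\wkn{n}$ to $\wkl$, I would use the standard fixed-length binary encoding of labels. Set $b = \lceil \log_2 n\rceil$, let $\beta$ send each label $a < n$ to its length-$b$ binary representation, and extend $\beta$ to $n$-ary strings by concatenation. Given an instance $u$ of $\wkn{n}$, let $x$ be the binary tree whose nodes are exactly the binary strings that are prefixes of $\beta(\nu)$ for some $\nu \in u$; binary blocks naming no label below $n$ are thus simply absent from $x$. This $x$ is prefix-closed by construction and $\Delta^0_1$ in $u$. The essential point is that a node of $x$ of length $b\ell$ must equal $\beta(\nu)$ for a unique $\nu \in u$ with $|\nu| = \ell$: it is a prefix of some $\beta(\mu)$ with $|\mu| \ge \ell$, and that prefix is exactly $\beta(\mu\restriction \ell)$, where $\mu\restriction\ell \in u$ since $u$ is a tree. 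Hence $u$ infinite implies $x$ infinite, and, reading any path $y$ through $x$ in successive blocks of length $b$, the $\ell$-th block decodes to a label so that the decoded finite sequences are nested and determine an infinite path $v$ through $u$. Taking $v = \Psi(y)$ to be this block decoding completes the argument.

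The main obstacle will be this last direction, specifically carrying out the block decoding inside $i\rca$ while accommodating the case in which $n$ is not a power of $2$. There I must confirm that every node of $x$ at a level divisible by $b$ is genuinely the $\beta$-image of a node of $u$ rather than an artifact of a truncated or invalid block, so that the decoding map is total on paths and respects the tree structure, and that infinitude is transferred both in the forward construction and under decoding. Once the bijective correspondence between full length-$b$ blocks and labels below $n$ is pinned down, these reduce to routine primitive recursive bookkeeping provable in $i\rca$.
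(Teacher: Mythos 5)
Your proposal is correct and takes essentially the same route as the paper: the direction $S^{\wkn{n}}_{\wkl}$ is handled by the trivial inclusion of binary trees among $n$-ary trees, and $S^{\wkl}_{\wkn{n}}$ by a fixed-length block encoding of labels, forming the binary tree of prefixes of encoded sequences and decoding any path blockwise. The only difference is cosmetic: you encode each label $a<n$ by its length-$\lceil \log_2 n\rceil$ binary representation, whereas the paper uses unary blocks ($k$ ones padded with zeros); both face and resolve the same bookkeeping point that every block appearing in the constructed tree genuinely codes a label.
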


\begin{proof}
For $n\ge 2$, every binary tree is an $n$-ary tree, so $S^{\wkl n}_ {\wkl}$ holds trivially.
To prove $S^{\wkl}_{\wkl n}$, let $T$ be an $n$-ary tree.  Any sequence of $m$ values less
than or equal to $n$ can be mapped to a binary sequence of length $m\cdot n$ in the manner of
the following example.  Consider $\langle 2, 0 , 3 \rangle$ as a sequence of numbers less than or equal to $3$.
Replace each value $k$ with a block of length $3$ consisting of $k$ ones padded on the right with zeros.
In our example $\langle 2, 0 , 3 \rangle$ becomes $\langle 1,1,0, \,\, 0,0,0,\,\, 1, 1, 1\rangle$.  This
process transforms any $n$-ary tree $T$ into a binary tree that is well-founded if and only if $T$ is.
Furthermore, summing successive length $n$ blocks of any infinite path through the binary tree yields a path
through $T$.
\end{proof}

We will use the following formalization
of $\llpo$, which is defined for all functions from $\nat$ into $\nat$.
 Na\"i{}vely, if a function $p$ has a positive value, then $\llpo$ returns
 the flip of the parity of that first positive value.
 
 \begin{dLLPO}(Lesser limited principle of omniscience):
For any $p:\nat \to \nat$
there is a value $\llpo (p)$ such that 
%if $\llpo (p) =0$ then if there is a $j$ such that $p(j) \neq 0$, then the least such $j$ is odd, and
%if $\llpo (p) \neq 0$, then $\llpo (p) = 1$ and if there is $j$ such that $p(j) \neq 0$, then the least such $j$ is even.
$\llpo (p)=0$ implies if $p(j) \neq 0$ and $\forall t< j (p(j)=0)$ then $j$ is odd, and
$\llpo (0) \neq 0$ implies $\llpo(p) =1$ and if $p(j) \neq 0$ and $\forall t< j (p(j)=0)$ then $j$ is even.
\end{dLLPO}

Our definition is chosen so that for functions $p$ with ranges that are non-zero
in at most one place, the value of $\llpo (p)$ matches the
definition of Brattka, Gherardi, and Pauly \cite{bgp}*{\S7.2}.
By mapping functions from $\nat$ to $\nat$ to the characteristic function for the
least element of their domain taking a positive value, routine verification shows
that our total version of $\llpo$ is strongly Weihrauch equivalent to their version.

The predicate ``$j$ is odd'' can be formalized by $\forall m (j \neq 2m)$.  Similarly,
``$j$ is even'' can be formalized by $\forall m (j \neq 2m+1 )$.  Consequently,
$\llpo (p) =n$ can be formalized by the $\exists$-free formula:
\begin{align*}
&( n=0  \to 
\forall j ([p(j)\neq 0 \land \forall t (t<j \to p(t)=0)] \to \forall m (j \neq 2m))) \land \\
&( n\neq 0 \to 
(n=1 \land \forall j( [p(j)\neq 0 \land \forall t (t<j \to p(t)=0)] \to \forall m (j \neq 2m+1)))
)
\end{align*}

The problem $\llpo$ can be parallelized, resulting in a problem that is
Weihrauch equivalent to $\wkl$, as noted in part of
Theorem 7.23 of
Brattka, Gherardi, and Pauly \cite{bgp}.
The following results lead to a formalization of a fragment of their theorem,
and its connection to graph colorings.

\begin{lemma}\label{BGP}
$(i\rca )$  $S^{\sf \widehat\llpo}_{\sf \wkl}$ holds.% and $S^{\sf \wkl}_ {\sf \widehat\llpo}$ hold.
\end{lemma}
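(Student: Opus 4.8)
The plan is to exhibit, inside $i\rca$, the existential witnesses demanded by the reduction prenex formula. Since both problems are formalized as total, I would work with the simple form
\[
S^{\widehat\llpo}_{\wkl}:\ \forall T\,\exists f\,\forall Y\,\exists p\,\bigl(\widehat\llpo(f,Y)\to\wkl(T,p)\bigr),
\]
where $\wkl(T,p)$ is the $\exists$-free implication ``if $T$ has a node of every length then $p{\restriction}n\in T$ for all $n$.'' First I would fix a binary tree $T$ and, for each node $\sigma\in 2^{<\nat}$ (coded by $\scd$), define an $\llpo$-instance $p_\sigma$ comparing the two child subtrees: $p_\sigma$ takes a nonzero value at the even position $2\ell$ exactly at the first level $\ell$ where no node of $T$ of length $|\sigma|+1+\ell$ extends $\sigma\cat\langle 0\rangle$, and at the odd position $2\ell+1$ exactly at the first such level for $\sigma\cat\langle 1\rangle$. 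The type-$1$ function $f(i,n)=p_{\scd(i)}(n)$ is primitive recursive in $T$, so $i\rca$ proves it exists. Given any $Y$, I would read off $p$ by primitive recursion, letting $p(n)$ be the value $Y$ records for the instance indexed by $p{\restriction}n$; again $i\rca$ proves $p$ exists. If $Y$ is not a genuine $\widehat\llpo$-solution the antecedent fails and the implication is free, so it remains only to treat the case in which every answer $Y(\sigma)$ is a valid $\llpo$-output for $p_\sigma$ and $T$ is infinite.

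The one thing I would prove about the construction is a single \emph{local} lemma, argued for a fixed $\sigma$ with no induction, straight from the definition of $\llpo$. Writing $b$ for the valid answer at $\sigma$, the claim is: if the subtree above the chosen child $\sigma\cat\langle b\rangle$ is finite, then the subtree above the other child is finite too, and hence the subtree above $\sigma$ is finite. Validity pins the chosen side to be the one whose ``death level'' is at least the other's: if $b=0$ then the first nonzero value of $p_\sigma$ is odd, so the even death-marker of side $0$ can appear only after an odd death-marker of side $1$, forcing side $1$ to die no later; symmetrically for $b=1$. (The degenerate case in which $\sigma\cat\langle b\rangle$ is already absent is covered as well, since then validity forces $b=1$ and pins side $0$ absent too.) Every quantity here—``the least level at which a subtree dies,'' membership of a length-$L$ string in $T$—is a bounded, hence decidable, predicate, so the use of least death levels and the attendant double-negation eliminations are intuitionistically legitimate.

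With this lemma I would verify $p{\restriction}n\in T$ for all $n$ \emph{contrapositively}, and this is the device that keeps the whole argument inside $i\rca$. Suppose $p{\restriction}n\notin T$ for some $n$. By closure of $T$ under initial segments, no node of $T$ extends $p{\restriction}n$, so the subtree above $p{\restriction}n$ is empty, in particular finite. Since each $p{\restriction}k$ is by construction the chosen child of $p{\restriction}(k-1)$ and every answer is valid, the local lemma propagates finiteness \emph{backwards}: finiteness of the subtree above $p{\restriction}k$ yields finiteness of the subtree above $p{\restriction}(k-1)$, and iterating down to $k=0$ makes $T$ itself finite, contradicting the hypothesis on $T$. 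Crucially, ``the subtree above $p{\restriction}k$ is finite'' is $\Sigma^0_1$, so this downward iteration is an instance of $\Sigma^0_1$-induction, which $i\rca$ provides. Finally, since $p{\restriction}n\in T$ is decidable in $n$, the statement $\neg\exists n\,(p{\restriction}n\notin T)$ yields $\forall n\,(p{\restriction}n\in T)$ intuitionistically, i.e.\ $\wkl(T,p)$, completing the implication and hence $S^{\widehat\llpo}_{\wkl}$.

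The step I expect to be the main obstacle is exactly this verification under the restricted induction and intuitionistic logic of $i\rca$. The naive approach—building $p$ and showing at each stage that ``the subtree above $p{\restriction}n$ is infinite''—carries a $\Pi^0_2$ invariant forward and so demands induction that $i\rca$ does not have; dead-end branches of a tree that is nonetheless infinite show the invariant is genuinely needed, not avoidable by a $\Delta^0_1$ surrogate. Recasting the verification as backward propagation of the $\Sigma^0_1$ \emph{finiteness} predicate, and checking that every auxiliary appeal to least elements and to $\neg\neg$-elimination lands on decidable bounded tree predicates, is the delicate part; once it is in place, the local lemma is a routine unwinding of the definition of $\llpo$.
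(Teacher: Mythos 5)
Your reduction is the paper's own: for each node $\sigma \in 2^{<\nat}$ you build an $\llpo$ instance whose parity markers record which child subtree of $\sigma$ dies first, so that any valid answer names a child surviving at least as long as the other, and the output path is obtained by following the answers. (The paper places markers at every level where exactly one child survives, while you place a single marker per side at its first death level; nothing turns on this.) The genuine difference is the verification that the resulting $p$ satisfies $\wkl(T,p)$, and here your argument is more careful than the paper's. The paper reasons forward with the invariant ``$\sigma\cat i$ extends to an infinite path'' and asserts that if $\sigma\cat 1$ does not so extend then there is a first level at which it has no extensions; but for binary trees, ``no infinite path above $\sigma\cat 1$'' implies ``the subtree above $\sigma\cat 1$ is finite'' only by (the contrapositive of) weak K\"onig's lemma itself, so the literal reading of the paper's sketch begs the question over $i\rca$, and the induction sustaining the forward invariant is never addressed there. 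Your contrapositive verification --- propagate the $\Sigma^0_1$ predicate ``the subtree above $p\restriction k$ is finite'' backward by $\Sigma^0_1$ induction, from $p\restriction n \notin T$ down to finiteness of $T$ itself, then remove the resulting double negation using decidability of $p\restriction n\in T$ --- is correct (your local lemma and its degenerate case check out against the paper's definition of $\llpo$), and it is exactly the kind of argument that survives the restricted induction and intuitionistic logic of $i\rca$. What the paper's route buys is brevity; what yours buys is an argument that actually lives in the stated system.

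One correction to your commentary, though it does not affect your proof: the forward invariant ``the subtree above $p\restriction n$ is infinite'' is $\Pi^0_1$, not $\Pi^0_2$, for binary trees --- the existence of a node of length $\ell$ above a fixed node is a bounded search, which is precisely why the paper can formalize the hypothesis of $\wkl$ by an $\exists$-free formula. So the obstacle to the naive forward argument is not a $\Pi^0_2$ invariant; it is the subtler question of whether $\Pi^0_1$ induction and the attendant case analyses go through intuitionistically, a question your $\Sigma^0_1$ reformulation cleanly sidesteps rather than answers.
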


\begin{proof}
To prove $S^{\widehat\llpo}_{\wkl}$, suppose $T$ is a binary tree.  For every $\sigma \in 2^{<\nat}$, define
the instance $p_\sigma$ of $\llpo$ as follows.  If $\sigma \notin T$, then for each $n$,
$p(n) = 0$.  For $\sigma \in T$, let $p_\sigma (0) = p_\sigma (1) = 0$.
For $n> 0$, define $p_\sigma (2n)$ and $p_\sigma (2n+1 )$ as follows.
If $\sigma \cat 0$ has an extension of length $n$ in $T$ and $\sigma \cat 1$ has
no extension of length $n$ in $T$, let $p_\sigma (2n) = 0$ and $p_\sigma (2n+1 ) = 1$.
If $\sigma \cat 1$ has an extension of length $n$ in $T$ and $\sigma \cat 0$ has no
extension of length $n$ in $T$, let $p_\sigma (2n) = 1$ and $p_\sigma (2n + 1) = 0$.  Otherwise,
let $p_\sigma (2n) = p_\sigma (2n+ 1) = 0$.

If $\sigma \cat 0$ extends to an infinite path, but $\sigma \cat 1$ does not, then there is a first
$n>0$ such that $\sigma \cat 1$ has no extensions of length $n$.  In this case,
$\llpo ( p_\sigma ) = 0$.  Similarly, if $\sigma \cat 1$ extends to an infinite path but $\sigma \cat 0$
does not, $\llpo (p_\sigma ) =1$.  Thus, if $T$ is infinite, the sequence constructed by setting
$\sigma (0) = \llpo (\langle\, \rangle )$ and $\sigma (n+1) = \sigma (n) \cat \llpo (\sigma (n))$ for
$n \ge 0$ is an infinite path through $T$.
%To prove $S^{\wkl}_{\widehat\llpo}$, suppose $\seq{p_i}_{i \in \nat}$ is a sequence of instances of $\llpo$.  Define
%for every $i<n$, $\sigma (i)$ is a correct $\llpo$ solution for some extension of $p_i$ restricted to $n$.
%Note that $\sigma(i)$ is a correct solution of $p_i$ restricted to $n$ if and only if it is a correct
%solution to one of the following extensions:
%\[
%p(m) = 
%\begin{cases}
%p_i(m)&\text{if~}m<n\\
%1&\text{if~}m=n\\
%0&\text{otherwise}
%\end{cases}
%\qquad
%p^\prime(m) = 
%\begin{cases}
%p_i(m)&\text{if~}m<n\\
%1&\text{if~}m=n+1\\
%0&\text{otherwise}
%\end{cases}
%\]
%Consequently, $T$ is computable from $\seq{p_i}_{i \in \nat}$.  By the definition of $T$, if $s$ is an infinite path
%through $T$ then for each $i$, $s(i)$ is a correct $\llpo$ solution to $p_i$.
\end{proof}

%\begin{lemma}\label{GCL1}
%$( i\rcaw )$
%For each $k \ge 0$, $S^{{\sf P}_{i+1}}_{{\sf P}_i}$ holds for each problem in the following list:
%\[
%\begin{matrix}
%{\sf P}_0:  \widehat\llpo&
%{\sf P}_1:   \gck{2}&
%{\sf P}_2: \gck{3}&
%{\sf P}_3: \wkn{3}
%\end{matrix}
%\]
%\end{lemma}

\begin{lemma}\label{GCL1}
$(i\rca )$  $S^{\gck{2}}_{\widehat \llpo}$.
\end{lemma}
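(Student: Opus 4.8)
The plan is to establish the reduction prenex formula $S^{\gck{2}}_{\widehat{\llpo}}$, which encodes $\widehat{\llpo} \wlt \gck{2}$, by producing from an instance of $\widehat{\llpo}$ a single graph whose every $2$-coloring simultaneously decodes all of the component $\llpo$ answers. So, fix an instance $\langle p_i \rangle_{i \in \nat}$ of $\widehat{\llpo}$ (the witness for $u$) and build $G$ (the witness for $x$) as a disjoint union of gadgets $G_i$, one for each $p_i$. Each $G_i$ consists of an infinite path with vertices $x^i_0, x^i_1, x^i_2, \dots$ and edges $\{x^i_n, x^i_{n+1}\}$, together with one extra output vertex $o_i$; I attach $o_i$ by adding the single edge $\{o_i, x^i_n\}$ exactly when $n$ is the first index at which $p_i$ is positive, that is, when $p_i(n) \neq 0$ and $p_i(t)=0$ for all $t<n$. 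Laid out on $\nat$ by the pairing function, the edge set of $G$ is primitive recursive in $\langle p_i \rangle$, so $i\rca$ proves $G$ exists. Coloring each $x^i_n$ by $n \bmod 2$ and each $o_i$ to differ from its (at most one) neighbor, or arbitrarily if it has none, gives an explicit $2$-coloring of $G$; hence every finite subgraph $G_m$ is $2$-colorable and the clause $p_1(x)$ of the prenex formula holds.

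For the decoding step, let $f$ be an arbitrary $2$-coloring of $G$ (any $y$ satisfying $p_2(x,y)$). Reading the two colors as $0$ and $1$, the path forces $f(x^i_n) = f(x^i_0) \oplus (n \bmod 2)$, so I define the $i$-th output to be the relative color $v_i = f(o_i) \oplus f(x^i_0)$. If $p_i$ first takes a positive value at index $j$, the edge $\{o_i, x^i_j\}$ forces $f(o_i) \neq f(x^i_j)$, so $v_i = (j \bmod 2) \oplus 1$; thus $v_i = 1$ when $j$ is even and $v_i = 0$ when $j$ is odd, which is precisely what the definition of $\llpo$ demands. If $p_i$ is never positive, $o_i$ is isolated and any value of $v_i$ satisfies the corresponding clause of $q_2$ vacuously. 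Setting $v = \langle v_i \rangle_{i \in \nat}$ (the witness for $v$) then yields $p_2(x,y) \to q_2(u,v)$, completing $S^{\gck{2}}_{\widehat{\llpo}}$.

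I expect the main obstacle to be the gadget design rather than the verification. The construction must keep every finite subgraph $2$-colorable, so each gadget has to remain a forest and may never close a cycle, yet a $2$-coloring must still expose the parity of the first positive index of $p_i$. The resolution is to record that parity not in an absolute color, which is impossible because of the per-component color-swap symmetry of bipartite graphs, but in the color of the output vertex relative to a fixed endpoint of the path, a relation that the single pendant edge pins down whenever it is present and that is irrelevant when it is absent. Once the gadget is fixed, the remaining work — confirming the read-off, and observing that the entire construction is $\Delta^0_1$ in the input and so is carried out in $i\rca$ — is routine.
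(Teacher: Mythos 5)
Your proof is correct and follows essentially the same route as the paper's: the same gadget (a disjoint union, over $i$, of an infinite path plus one pendant vertex attached at the first positive index of $p_i$), and the same decoding via the color of the pendant vertex relative to the path's initial vertex (differ $\Rightarrow 1$, agree $\Rightarrow 0$). Your additional checks of local $2$-colorability and computability of the construction are routine elaborations of what the paper leaves implicit.
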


\begin{proof}
Suppose $p_i$ is an instance of $\llpo$.  Construct a subgraph $G_i$ with
vertices $u_i$ and $\{ v_{i,0} , v_{i,1}, \dots\}$ as follows.  
For all $k$, include the edge $(v_{i,k}, v_{i,k+1})$.  If $j$ is the least natural number
such that $p_i(j) \neq 0$, add the edge $(u_i , v_{i,j})$.  For any 2-coloring of $G_i$,
if $u_i$ and $v_{i,0}$ differ in color, then $1$ is a correct output for $\llpo$.  If $u_i$
and $v_{i,0}$ agree in color, then $0$ is a correct output.  For a sequence of instances
of $\llpo$, let $G$ consist of the disjoint union of the graphs for each sequence.
Any two coloring of $G$ yields a solution to the instance of $\widehat \llpo$.
\end{proof}

\begin{lemma}\label{GCLm}
$(i \rca )$  For $n \ge 2$, $S^{\gck{n}}_{\gck{2}}$.
\end{lemma}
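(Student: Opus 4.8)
The plan is to prove the reduction prenex formula $S^{\gck{n}}_{\gck{2}}$, which formalizes the reduction $\gck{2}\wlt\gck{n}$: starting from a graph $G$ accepted as an instance of $\gck{2}$, I will build a graph $H$ that is a legitimate instance of $\gck{n}$, and then read a $2$-coloring of $G$ off of any $n$-coloring of $H$. As in the proofs of Lemmas~\ref{GCL0} and~\ref{GCL1}, it suffices to exhibit the forward graph transformation (the witness for the $\exists x$ of the prenex formula) and the backward extraction of a solution (the witness for $\exists v$), and then to check the implications in the matrix: assuming the precondition $q_1$ that every $G_m$ is $2$-colorable, I must verify the precondition $p_1$ that every $H_m$ is $n$-colorable, and verify that whenever $y$ is an $n$-coloring of $H$ the extracted $v$ is a $2$-coloring of $G$.

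First I would fix $G$ and form $H$ by adjoining $n-2$ new vertices $w_1,\dots,w_{n-2}$, inserting every edge among them so that they form a clique, and inserting an edge from each $w_i$ to every vertex of $G$. To present $H$ on vertex set $\nat$ I would relabel, placing $w_1,\dots,w_{n-2}$ at vertices $0,\dots,n-3$ and sending vertex $v$ of $G$ to vertex $v+(n-2)$ of $H$. The edge set of $H$ is then a primitive recursive modification of that of $G$, so $i\rca$ proves its existence.

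Next I would verify the preconditions and the solution transfer. Assuming every $G_m$ is $2$-colorable, fix any $H_m$; its $G$-part carries a $2$-coloring using two of the $n$ available colors, and I color the clique vertices appearing in $H_m$ with distinct colors drawn from the remaining $n-2$. The three kinds of edges---those within the clique, those joining the clique to $G$, and those within $G$---then all join differently colored vertices, so $H_m$ is $n$-colored and $H$ is a valid instance of $\gck{n}$. For the backward map, let $c$ be any $n$-coloring of $H$. The clique $w_1,\dots,w_{n-2}$ receives $n-2$ distinct colors and hence omits exactly two colors $a$ and $b$; since every vertex $v$ of $G$ is adjacent to all the $w_i$, its color $c(v+(n-2))$ lies in $\{a,b\}$. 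Setting $f(v)=0$ when $c(v+(n-2))=a$ and $f(v)=1$ otherwise gives a total function into $\{0,1\}$, and any edge $(u,v)$ of $G$ forces $c(u+(n-2))\neq c(v+(n-2))$, hence $f(u)\neq f(v)$; so $f$ is a $2$-coloring of $G$.

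I expect the only real care to be in the bookkeeping. The clique must be finite while reaching every vertex of $G$, so that each finite subgraph $H_m$ contains only finitely many edges and the local $n$-colorability argument applies uniformly; and the two omitted colors $a,b$ must be read off the single global coloring $c$, so that $f$ is well defined on all of $\nat$ rather than locally. Both points reduce to the primitive recursive manipulations already available in $i\rca$, so no machinery beyond the construction itself is required.
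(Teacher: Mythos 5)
Your proof is correct and follows essentially the same route as the paper: adjoin a clique on $n-2$ new vertices joined to every vertex of $G$, observe the result is locally $n$-colorable, and recover a $2$-coloring of $G$ from any $n$-coloring. The only difference is that you spell out the details the paper leaves implicit (the relabeling of vertices and the renaming of the two surviving colors to $\{0,1\}$), which is a welcome bit of care but not a different argument.
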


\begin{proof}  Suppose $G$ is a locally $2$-colorable graph.  For $n>2$, create a single
copy of the complete graph on $n-2$ vertices, and connect each of its vertices to every
vertex of $G$.  The new graph is locally $n$-colorable, and the restriction of any $n$-coloring
to the vertices of $G$ yields a $2$-coloring of $G$.
\end{proof}

\begin{lemma}\label{GCL2}
$(i\rca )$  For $n \ge 2$, $S^{\wkn{n}}_ {\gck{n}}$ holds.
\end{lemma}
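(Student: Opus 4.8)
The plan is to establish the reduction prenex formula $S^{\wkn{n}}_{\gck{n}}$ directly, realizing the reduction of the coloring problem $\gck{n}$ to the path problem $\wkn{n}$ by the classical tree-of-partial-colorings construction. With ${\sf P} = \wkn{n}$ and ${\sf Q} = \gck{n}$, the formula $S^{\wkn{n}}_{\gck{n}}$ asserts that for every graph $u = G$ there is an $n$-ary tree $x = T$ such that for every $y$ there is a coloring $v$ with the property that, if $G$ is locally $n$-colorable, then $T$ satisfies the hypothesis of $\wkn{n}$ and, moreover, whenever $y$ codes an infinite path through $T$ the output $v$ is an $n$-coloring of $G$. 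Note that $T$ must be built from $u$ alone and $v$ read from $y$ alone, matching the quantifier dependencies of a Weihrauch reduction; so the work is to give these two maps explicitly and to verify the implication intuitionistically.

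First I would define the forward map. Given $G$, I declare a sequence $\sigma$ of numbers less than $n$ and of length $\ell$ to be a node of $T$ exactly when the assignment sending vertex $i$ to color $\sigma(i)$ for each $i<\ell$ is a valid $n$-coloring of the subgraph of $G$ induced on $\{0,\dots,\ell-1\}$; that is, whenever $(a,b)\in G$ with $a<b<\ell$ we require $\sigma(a)\neq\sigma(b)$. Restricting a valid partial coloring to fewer vertices leaves it valid, so $T$ is prefix-closed and is therefore coded by a characteristic function in the sense of the $\wkn{n}$ encoding; and since membership is a bounded, primitive recursive predicate in $\sigma$ and $G$, the system $i\rca$ proves that $T$ exists. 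The nodes of length $\ell$ correspond exactly to the $n$-colorings of the induced subgraph on the first $\ell$ vertices, so the hypothesis that $G$ is locally $n$-colorable immediately yields that $T$ contains a node of every length, which is precisely the $\wkn{n}$ hypothesis on $T$.

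Next I would define the back map. Given $y$, if $y$ codes an infinite path through $T$ then I let $v$ be the function $f$ reading the color of vertex $i$ off level $i$ of the path, and otherwise I let $v$ be a fixed default, since the output is constrained only when $y$ is a genuine path. When $y$ is a path, every finite initial segment of $f$ is a node of $T$ and hence a valid partial coloring, so for any edge $(a,b)\in G$ an initial segment longer than $b$ already witnesses $f(a)\neq f(b)$. Thus $f$ is an $n$-coloring of all of $G$, establishing the conclusion.

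Since every step---the definition of $T$, the verification that local colorability produces arbitrarily long nodes, and the extraction of $f$ from a path---is an explicit construction using only bounded search and primitive recursion, no nonconstructive principle is invoked and the implication goes through in intuitionistic $i\rca$. The one point demanding care is the bookkeeping between the paper's indexing of the finite subgraphs $G_m$ and the levels of $T$; this is simply the formalized analog of the standard compactness argument that a locally $n$-colorable graph has an $n$-coloring computable from a path through its coloring tree, and I expect no essential obstacle beyond that bookkeeping.
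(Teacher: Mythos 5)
Your proposal is correct and is essentially the paper's own proof: the paper's (one-sentence) argument is exactly the tree-of-partial-colorings construction, namely that for a locally $n$-colorable graph $G$, $i\rca$ proves the tree of $n$-colorings of the finite subgraphs $G_m$ is an instance of $\wkn{n}$, with a path yielding the coloring. You have merely spelled out the quantifier bookkeeping and the verification details that the paper leaves implicit.
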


\begin{proof}
For any locally $n$-colorable graph $G$, $i\rca$ can prove that the tree of
sequences corresponding to $n$-colorings of the finite subgraphs $G_m$ is an instance of $\wkn{n}$.
\end{proof}

We can concatenate the lemmas to yield our main theorem on Weihrauch equivalences related to graph colorings.

\begin{theorem}\label{GCT}
$(i\rcaw )$  For $n\ge 2$, we have:
\[\wkn{n} \weq \wkl \weq \widehat\llpo \weq \gck{n} \weq \gck{2}\]
\end{theorem}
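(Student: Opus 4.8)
The plan is to assemble the five problems into a single cycle of Weihrauch reductions and then close the cycle with transitivity. All of the combinatorial content already resides in Lemmas~\ref{GCL0}, \ref{BGP}, \ref{GCL1}, \ref{GCLm}, and \ref{GCL2}, so the task here is organizational: convert each reduction prenex formula $S^{\sf P}_{\sf Q}$ into an actual Weihrauch reduction ${\sf Q}\wlt{\sf P}$, keep the superscript/subscript bookkeeping straight, and verify that the resulting reductions chain into a cycle.

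First I would check the hypothesis of Theorem~\ref{hm1} for each of the cited lemmas. Each of the problems $\wkn{n}$, $\wkl$, $\gck{n}$, $\gck{2}$, and $\llpo$ has an $\exists$-free solution predicate, as established in the surrounding text; for $\widehat\llpo$ the solution predicate is obtained from that of $\llpo$ by prepending a universal quantifier over the index, so it too is $\exists$-free. Consequently each reduction prenex formula appearing in the five lemmas has an $\exists$-free matrix and lies in $\Gamma_1$. Because those lemmas prove the formulas in $i\rca$, they are a fortiori provable in $i\rcaw$, so the forward direction of Theorem~\ref{hm1} converts each one into a Weihrauch reduction provable in $i\rcaw$.

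Reading off the directions from the $S^{\sf P}_{\sf Q}$ convention, which formalizes ${\sf Q}\wlt{\sf P}$, the lemmas yield $\wkn{n}\wlt\wkl$ and $\wkl\wlt\wkn{n}$ from Lemma~\ref{GCL0}, $\wkl\wlt\widehat\llpo$ from Lemma~\ref{BGP}, $\widehat\llpo\wlt\gck{2}$ from Lemma~\ref{GCL1}, $\gck{2}\wlt\gck{n}$ from Lemma~\ref{GCLm}, and $\gck{n}\wlt\wkn{n}$ from Lemma~\ref{GCL2}. These fit together as the cycle
\[
\wkl\wlt\widehat\llpo\wlt\gck{2}\wlt\gck{n}\wlt\wkn{n}\wlt\wkl.
\]
I would then invoke the formalized transitivity of Lemma~\ref{TRANS} repeatedly to conclude that each problem in the cycle reduces to every other, so that all five problems are Weihrauch equivalent over $i\rcaw$, which is exactly the chain of equivalences in the statement.

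I expect the only real pitfall to be notational: because $S^{\sf P}_{\sf Q}$ encodes the reduction ${\sf Q}\wlt{\sf P}$ with the roles reversed relative to the visual placement, it is easy to read a reduction backwards and fail to close the cycle. The essential check is therefore that the six reductions listed above actually link head-to-tail into a closed loop touching all five problems, which they do. Everything else is a routine application of Theorem~\ref{hm1} and Lemma~\ref{TRANS}.
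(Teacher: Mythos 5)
Your proposal is correct and follows essentially the same route as the paper: apply Theorem~\ref{hm1} to Lemmas~\ref{GCL0}, \ref{BGP}, \ref{GCL1}, \ref{GCLm}, and \ref{GCL2} to obtain the cycle $\wkn{n} \wlt \wkl \wlt \widehat\llpo \wlt \gck{2} \wlt \gck{n} \wlt \wkn{n}$, then close it with the formalized transitivity of Lemma~\ref{TRANS}. You read all the superscript/subscript directions correctly, and your explicit check of the $\Gamma_1$ hypothesis (left implicit in the paper's proof, since the $\exists$-freeness of the constituent predicates is established in the surrounding text) is a harmless addition.
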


\begin{proof}
Applying Theorem \ref{hm1} to Lemma \ref{GCL0} and Lemma \ref{BGP}, we have $\wkn{n} \wlt \wkl \wlt \widehat{\llpo}$.
Applying Theorem \ref{hm1} to Lemma \ref{GCL1}, Lemma \ref{GCLm}, and Lemma~\ref{GCL2} yields
$\widehat{\llpo} \wlt \gck{2} \wlt \gck{n}\wlt \wkl{n}$.  The theorem follows by transitivity as provided by Lemma \ref{TRANS}.
\end{proof}

Given the formal Weihrauch reductions, we can extract the reverse mathematics consequences.

\begin{corollary}\label{GCC}
$(i\rca )$  The formula $S^{\sf P}_{\sf Q}$ holds for every problem $\sf P$ and
problem $\sf Q$ appearing in Theorem \ref{GCT}.  Furthermore the implication ${\sf P} \to {\sf Q}$ holds.
\end{corollary}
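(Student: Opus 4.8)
The plan is to follow the same extraction pattern used in Corollary~\ref{LGC} and Corollary~\ref{GHRM}: translate the Weihrauch equivalences of Theorem~\ref{GCT} into the prenex reduction formulas via the reverse direction of Theorem~\ref{hm1}, and then descend from the higher order system to the second order system using the conservation result.

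First I would verify that each reduction prenex formula $S^{\sf P}_{\sf Q}$ lies in $\Gamma_1$. In the discussion surrounding each problem's definition, the defining predicates for $\wkl$, $\wkn{n}$, $\widehat\llpo$, $\gck{2}$, and $\gck{n}$ were each formalized as $\exists$-free formulas. Hence for any pair $\sf P$, $\sf Q$ drawn from the list of Theorem~\ref{GCT}, the constituents $p_1$, $p_2$, $q_1$, $q_2$ are $\exists$-free, so the matrix of $S^{\sf P}_{\sf Q}$ is $\exists$-free and $S^{\sf P}_{\sf Q} \in \Gamma_1$; in particular the hypothesis of Theorem~\ref{hm1} is met uniformly across the chain.

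Next I would apply the reverse implication of Theorem~\ref{hm1} to the reductions packaged in Theorem~\ref{GCT}. Each equivalence in that chain decomposes into two Weihrauch reductions provable in $i\rcaw$; for each such reduction ${\sf Q} \wlt {\sf P}$, Theorem~\ref{hm1} yields $i\rcaw \vdash S^{\sf P}_{\sf Q}$. This establishes each desired prenex formula over the higher order system.

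The final step is the descent to $i\rca$. Since each problem here is total, the reduction prenex formula takes the form $\forall u \exists x \forall y \exists v(\ldots)$ with arithmetic matrix and quantifiers ranging only over numbers and functions, so it is a second order formula. By the conservation result Theorem~\ref{cons}, $i\rcaw \vdash S^{\sf P}_{\sf Q}$ gives $i\rca \vdash S^{\sf P}_{\sf Q}$. For the concluding implication, I would invoke the fact, noted after the definition of the reduction prenex formula, that $S^{\sf P}_{\sf Q} \to ({\sf P} \to {\sf Q})$ is provable in intuitionistic predicate calculus; as ${\sf P} \to {\sf Q}$ is again second order for total problems, this delivers $i\rca \vdash {\sf P} \to {\sf Q}$. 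I expect no serious obstacle here, since the argument is a routine assembly; the only points requiring genuine care are confirming the $\exists$-free character of every constituent predicate so that $\Gamma_1$ membership and Theorem~\ref{hm1} apply across the whole chain, and confirming the second order character of the formulas so that Theorem~\ref{cons} licenses the descent.
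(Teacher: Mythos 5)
Your proposal follows exactly the route the paper takes: the paper's own proof is the one-line instruction ``Imitate the proof of Corollary~\ref{LGC}'', which unpacks to precisely your steps --- $\Gamma_1$ membership of the prenex formulas via the $\exists$-free constituent predicates, the reverse direction of Theorem~\ref{hm1} applied to the reductions assembled in Theorem~\ref{GCT}, the intuitionistically valid implication $S^{\sf P}_{\sf Q} \to ({\sf P}\to{\sf Q})$, and (as in Corollary~\ref{GHRM}) descent from $i\rcaw$ to $i\rca$ by the conservation result Theorem~\ref{cons}. One claim in your final step is false, although it does not sink the argument: not every problem in Theorem~\ref{GCT} is total. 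The problem $\gck{k}$ accepts only locally $k$-colorable graphs --- the paper introduces $\tgc{k}$ later precisely because $\gck{k}$ rejects some inputs --- and $\wkl$, $\wkn{n}$ accept only infinite trees. So you cannot justify anything by appeal to the simple total form $\forall u\exists x\forall y\exists v(p(x,y)\to q(u,v))$; you must work with the general form $\forall u \exists x \forall y \exists v(q_1(u) \to (p_1(x) \land (p_2(x,y)\to q_2(u,v))))$. Fortunately this costs nothing: since the constituent predicates $p_1$, $p_2$, $q_1$, $q_2$ for all five problems were formalized as $\exists$-free formulas, the general form is still in $\Gamma_1$ (as noted right after the definition of the reduction prenex formula), it is still a second order formula with the same quantifier prefix (after conflating sequence inputs, e.g.\ those of $\widehat\llpo$, with single functions via pairing), and the implication $S^{\sf P}_{\sf Q} \to ({\sf P}\to{\sf Q})$ is provable in intuitionistic predicate calculus for partial problems just as for total ones. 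With the totality claim deleted and those two appeals rerouted through the general form, your argument coincides with the paper's intended proof.
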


\begin{proof}
Imitate the proof of Corollary \ref{LGC}.
\end{proof}

\begin{corollary}\label{GCC2}
$(i\rca )$  For $k \ge 2$, the following are equivalent:
\begin{enumerate}
\item  $\wkl$.
\item  $\widehat \llpo$.
\item  $\wkn{k}$
\item  $\gck{k}$.
\end{enumerate}
\end{corollary}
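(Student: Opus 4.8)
The plan is to reduce this corollary to the machinery already assembled, exactly as Corollary~\ref{GHRM} reduced the parallelized local-coloring equivalences to $\aca$. The four items are all second-order formulas (once we conflate $\widehat\llpo$ and $\wkn{k}$ with their second-order analogs as done earlier), so by Theorem~\ref{cons} it suffices to prove the equivalences in $i\rcaw$ and then transfer them down to $i\rca$. First I would invoke Corollary~\ref{GCC}: since Theorem~\ref{GCT} establishes the full chain of Weihrauch equivalences $\wkn{n} \weq \wkl \weq \widehat\llpo \weq \gck{n} \weq \gck{2}$, Corollary~\ref{GCC} gives, for each ordered pair of problems $\sf P$, $\sf Q$ from this list, that $S^{\sf P}_{\sf Q}$ holds and hence the implication ${\sf P} \to {\sf Q}$ holds. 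Applying this to both orderings of each pair immediately yields the biconditionals linking items (1)--(4).

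The one genuinely new ingredient, and the step I expect to be the main obstacle, is connecting this cluster to a standard reverse-mathematics benchmark, since the corollary as stated lists only the four problem-implications among themselves. If the intended reading matches Corollary~\ref{GHRM}, the natural companion is $\wkl$ the subsystem, i.e.\ the reverse-mathematics principle Weak K\"onig's Lemma; the substantive work is then showing that the implication-form of the problem $\wkl$ is equivalent over $i\rca$ to the subsystem $\wkl$. The forward direction is routine: given the subsystem, one uses its tree-path existence to solve any instance of the problem. The reversal is the delicate part, because the usual proof that an infinite binary tree's path-existence principle recovers $\wkl$ proceeds classically; here I would need to supply a path through an arbitrary infinite binary tree using only the functional witnessing the problem, taking care (as the authors did in Corollary~\ref{GHRM} when they replaced injective functions by almost-injective ones) to route around any appeal to classical logic so that the argument goes through intuitionistically in $i\rca$.

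Concretely, I would proceed in the following order. I would first record that all four formulas are second-order. Next I would apply Corollary~\ref{GCC} to obtain the six implications among items (1)--(4) in both directions, establishing their mutual equivalence as formulas over intuitionistic predicate calculus and hence over $i\rca$. I would then, if the benchmark connection is intended, prove the two implications linking the problem $\wkl$ to the subsystem $\wkl$, handling the reversal with an intuitionistically acceptable path-selection argument. Finally I would note that, all formulas being second-order, Theorem~\ref{cons} guarantees the equivalences are provable in $i\rca$ itself, completing the proof in the same style as the closing sentence of Corollary~\ref{GHRM}.
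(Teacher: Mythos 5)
Your first paragraph, together with the first two steps of your concrete plan, is exactly the paper's proof: the paper disposes of this corollary in one line, ``Immediate from Corollary~\ref{GCC},'' since Corollary~\ref{GCC} already supplies $S^{\sf P}_{\sf Q}$, and hence the implication ${\sf P}\to{\sf Q}$, for every ordered pair of problems appearing in Theorem~\ref{GCT}, and the four listed items are precisely such problems. (Note that Corollary~\ref{GCC} is itself stated over $i\rca$, so the appeal to Theorem~\ref{cons} is already folded into it and need not be repeated here.) The obstacle you devote your second and third paragraphs to is, however, illusory: item (1) is the \emph{problem} $\wkl$ --- the second-order formula, defined earlier in the paper and appearing in Theorem~\ref{GCT}, asserting that every infinite binary tree with arbitrarily long sequences has an infinite path --- not a subsystem ${\sf WKL}_0$ that must be recovered by a separate reversal. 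Consequently there is no remaining ``benchmark connection'' to establish, and the delicate intuitionistic path-selection argument you anticipate never arises; since you made that step explicitly conditional, your proof stands once that conditional branch is simply dropped.
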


\begin{proof}
Immediate from Corollary \ref{GCC}.
\end{proof}

Parallelization of $\gck{k}$ results in a Weihrauch equivalent problem.  This arises from the well-known
idempotence of the parallelization operator, which can be verified in the formal setting.

\begin{lemma}\label{WKLH}
$( i\rcaw )$  The hat operator is idempotent, that is, for any problem $\sf P$, $\widehat{\sf P}\weq \widehat{\widehat{\sf P}}$.  Consequently, for $k\ge 2$, $\gck{k} \weq \widehat {\gck{k}}$.
\end{lemma}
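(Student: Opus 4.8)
The plan is to prove idempotence by exhibiting witnessing functionals directly, in the style of Lemma~\ref{HAT} and Lemma~\ref{TRANS}, rather than routing through Theorem~\ref{hm1}, since the statement concerns an arbitrary problem $\sf P$ whose constituent predicates need not be $\exists$-free. Two reductions must be established. The reduction $\widehat{\sf P} \wlt \widehat{\widehat{\sf P}}$ is the easy direction: an input for $\widehat{\sf P}$ is a single sequence $\langle u_i\rangle_{i\in\nat}$ of $\sf P$-instances, which is itself one $\widehat{\sf P}$-instance, so the forward functional $\Phi$ repeats it in every coordinate to form a valid $\widehat{\widehat{\sf P}}$-instance, and the backward functional $\Psi$ reads off the first coordinate of the returned $\widehat{\widehat{\sf P}}$-solution. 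Both are trivial rearrangements that $i\rcaw$ proves to exist.

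The substantive direction is $\widehat{\widehat{\sf P}} \wlt \widehat{\sf P}$. First I would fix the coding conventions: an input for $\widehat{\sf P}$ is a sequence of $\sf P$-instances coded by a single type~$1$ function over codes of pairs of natural numbers, exactly as $\langle f_i\rangle_{i\in\nat}$ was coded by $f(i,n)=f_i(n)$ earlier in this section. Hence an input for $\widehat{\widehat{\sf P}}$ is a doubly indexed family $\langle\langle u_{i,j}\rangle_j\rangle_i$ of $\sf P$-instances. Fixing a standard primitive recursive bijection between $\nat\times\nat$ and $\nat$, the forward functional $\Phi$ flattens the pair of indices $(i,j)$ to the single index $\seq{i,j}$, producing one sequence of $\sf P$-instances, that is, a single $\widehat{\sf P}$-instance. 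A solution of that $\widehat{\sf P}$-instance is a sequence of $\sf P$-solutions indexed by the flattened index; the backward functional $\Psi$ unflattens it, returning the coordinate at $\seq{i,j}$ to the $(i,j)$ position, which assembles a solution of the original $\widehat{\widehat{\sf P}}$-instance. Since the pairing bijection and its inverse are primitive recursive and $\Phi$, $\Psi$ are obtained by composing with them, $i\rcaw$ proves that $\Phi$ and $\Psi$ exist and witness the reduction.

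For the consequence, I would not instantiate idempotence at ${\sf P}=\gck{k}$ directly, but route through $\widehat\llpo$. By Theorem~\ref{GCT}, for $k\ge 2$ we have $\gck{k}\weq\widehat\llpo$. Applying Lemma~\ref{HAT} to each direction of this equivalence gives $\widehat{\gck{k}}\weq\widehat{\widehat\llpo}$. The idempotence just proved, instantiated at ${\sf P}=\llpo$, gives $\widehat{\widehat\llpo}\weq\widehat\llpo$. Chaining these with Theorem~\ref{GCT} once more and using the transitivity supplied by Lemma~\ref{TRANS} yields $\widehat{\gck{k}}\weq\widehat{\widehat\llpo}\weq\widehat\llpo\weq\gck{k}$, which is the desired $\gck{k}\weq\widehat{\gck{k}}$.

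The main obstacle is the bookkeeping in the substantive direction: one must verify that the flatten and unflatten maps are genuinely inverse on codes and that they carry instance--solution pairs of $\widehat{\widehat{\sf P}}$ to and from instance--solution pairs of $\widehat{\sf P}$, so that $\Psi$ applied to any $\widehat{\sf P}$-solution of $\Phi(u)$ is in fact a $\widehat{\widehat{\sf P}}$-solution of $u$. This is routine once the coding is seen to be a mechanical reindexing, but it is where the argument does its real work; everything else is formal manipulation that $i\rcaw$ handles as in the earlier lemmas.
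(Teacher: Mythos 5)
Your proposal is correct and follows essentially the same route as the paper: idempotence is proved by flattening the doubly indexed family through the primitive recursive pairing bijection (with the reverse reduction handled trivially), and the consequence is obtained exactly as in the paper by combining $\gck{k}\weq\widehat\llpo$ from Theorem~\ref{GCT} with Lemma~\ref{HAT}, the idempotence instance $\widehat{\widehat\llpo}\weq\widehat\llpo$, and the transitivity of Lemma~\ref{TRANS}. The only difference is one of detail: you make the witnessing functionals and the coding bookkeeping explicit where the paper leaves them as a sketch.
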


\begin{proof}
In $i\rcaw$, we can use the bijective pairing function to match each pair $(i,j)$ with an integer code.  Given a
sequence of sequences of problems $\langle\langle p_{i,j} \mid j \in \nat \rangle i\in \nat \rangle$, we match each problem
with an element of the sequence $\langle p_{(i,j)} \mid (i,j) \in \nat\rangle$.  Thus $\widehat{\widehat{\sf P}} \wlt \widehat{\sf P}$.
The reverse reduction is trivial.

From Theorem \ref{GCT} we know that $\gck{k} \weq \widehat \llpo$.  By Lemma \ref{HAT},
$\widehat{ \gck{k}} \weq \widehat {\widehat \llpo}$.  From the preceding paragraph, $\widehat {\widehat \llpo} \weq \widehat \llpo$,
so the desired equivalence follows by transitivity.
\end{proof}

In combination with Theorem \ref{GCT} we see that $\widehat{\gck{2}} \weq \widehat{\gck{3}}$.  These problems correspond
to reverse mathematical statements about infinite sequences  of 2-colorings and 3-colorings reminiscent of those of
Hirst and Lempp \cite{hl}.  The corresponding reverse mathematical results can be extracted in the usual fashion,
yielding the following corollary.

\begin{corollary}\label{GCC4}
$(i\rca )$  For $k\ge 2$, $\wkl$ is equivalent to the assertion that
for any sequence $\seq{G_i}_{i \in \nat}$ of infinite locally $k$-colorable graphs, there
is a function $f: \nat \times \nat \to \nat$ such that for each $i$, the function $f(i,n)$ is
a $k$-coloring of $G_i$.
\end{corollary}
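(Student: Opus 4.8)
The plan is to recognize the displayed assertion as exactly the second-order form of the parallelized problem $\widehat{\gck{k}}$, and then to extract the two required implications from the formal Weihrauch machinery in precisely the manner of Corollaries \ref{GHRM} and \ref{GCC}. An input for $\widehat{\gck{k}}$ is a sequence $\seq{G_i}_{i\in\nat}$ of locally $k$-colorable graphs, coded (as before) by a single type $1$ function via the pairing function, and a solution is a function $f$ for which each slice $f(i,\cdot)$ is a $k$-coloring of $G_i$. Conflating $\widehat{\gck{k}}$ with this $\Pi^1_2$ statement, exactly as was done for $\widehat{\lpo}$ in the paragraph preceding Corollary \ref{GHRM}, makes the identification precise, so that it suffices to prove over $i\rca$ both $\wkl \to \widehat{\gck{k}}$ and $\widehat{\gck{k}} \to \wkl$.

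First I would assemble the Weihrauch equivalence $\wkl \weq \widehat{\gck{k}}$ in $i\rcaw$. Lemma \ref{WKLH} already supplies $\gck{k} \weq \widehat{\gck{k}}$ for $k \ge 2$, and Theorem \ref{GCT} supplies $\wkl \weq \gck{k}$. Chaining these through the formalized transitivity of Lemma \ref{TRANS} gives $i\rcaw \vdash \wkl \weq \widehat{\gck{k}}$, and in particular both reductions $\widehat{\gck{k}} \wlt \wkl$ and $\wkl \wlt \widehat{\gck{k}}$.

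Next I would carry out the extraction. The input predicate for $\widehat{\gck{k}}$ is the universal closure over $i$ of the $\exists$-free predicate asserting that each $G_i$ is locally $k$-colorable, and the solution predicate is the universal closure over $i$ of the $\exists$-free predicate $\gck{k}(G_i, f(i,\cdot))$. Since a universal quantifier applied to an $\exists$-free formula is again $\exists$-free, both predicates remain $\exists$-free, and consequently $S^{\wkl}_{\widehat{\gck{k}}}$ and $S^{\widehat{\gck{k}}}_{\wkl}$ lie in $\Gamma_1$. Applying Theorem \ref{hm1} to the two reductions of the previous paragraph then yields $i\rcaw \vdash S^{\wkl}_{\widehat{\gck{k}}}$ and $i\rcaw \vdash S^{\widehat{\gck{k}}}_{\wkl}$. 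Because $S^{\sf P}_{\sf Q} \to ({\sf P} \to {\sf Q})$ is provable in intuitionistic predicate calculus, these give $i\rcaw \vdash \wkl \to \widehat{\gck{k}}$ and $i\rcaw \vdash \widehat{\gck{k}} \to \wkl$. Both implications are second-order formulas, so by the conservation result of Theorem \ref{cons} they descend to $i\rca$, establishing the stated equivalence.

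The step I expect to require the most care is not any combinatorial construction — none is needed beyond what Theorem \ref{GCT} and Lemma \ref{WKLH} already provide — but rather the bookkeeping that keeps the parallelized problem inside the $\Gamma_1$ fragment. Specifically, I would want to confirm that the coding of $\seq{G_i}_{i\in\nat}$ and of the output $f$ by single type $1$ objects preserves the $\exists$-free character of the input and solution predicates, so that Theorem \ref{hm1} genuinely applies to $\widehat{\gck{k}}$ rather than merely to $\gck{k}$, and to verify that the second-order form obtained by this conflation is word-for-word the assertion in the statement. Once that identification is in place, the result is a routine instance of the extract-from-Weihrauch template used throughout this section, and one could if desired simply imitate the proof of Corollary \ref{GCC}, substituting the equivalence $\wkl \weq \widehat{\gck{k}}$ for the equivalences of Theorem \ref{GCT}.
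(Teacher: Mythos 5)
Your proposal is correct and follows the paper's own (largely implicit) route exactly: the paper likewise obtains $\wkl \weq \widehat{\gck{k}}$ by combining Theorem \ref{GCT} with the idempotence Lemma \ref{WKLH}, and then extracts the second-order equivalence ``in the usual fashion,'' i.e., via $\Gamma_1$ membership, Theorem \ref{hm1}, the intuitionistic implication $S^{\sf P}_{\sf Q} \to ({\sf P} \to {\sf Q})$, and the conservation result of Theorem \ref{cons}. Your added care about the $\exists$-free character of the parallelized input and solution predicates is exactly the point the paper signals with ``because the associated predicates are $\exists$-free,'' so nothing in your argument diverges from theirs.
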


The $k$-coloring problem defined at the beginning of this subsection does not accept
inputs which are not locally $k$-colorable.  The following alternative definition extends
the possible inputs to all graphs.

\begin{dTCk}(Total $k$-coloring of graphs):
Given a graph $G$, there is a function $f: \nat \to \nat$
such that $f(0)=0$ implies $f$ is a $k$-coloring of $G$ and $f(0)>0$ implies $G_{f(0)}$ has no $k$-coloring.
\end{dTCk}

The predicate asserting that $f$ solves $\tgc{k}$ for $G$ can be formalized as a $\exists$-free formula
in the parameters $k$, $f$, and $G$.
Altering the problem affects the Weihrauch analysis.

\begin{theorem}\label{TGC1}
$( i\rcaw )$  For $k\ge 2$, $\lpo\wlt \tgc{k} \wlt \widehat \lpo$.
\end{theorem}

\begin{proof}
For the first reduction, fix $k$ and suppose $p$ is an instance of $\lpo$.  Let $G$ be the graph
which is completely disconnected except for a completely connected subgraph on the vertices
$\{m, m+1, \dots , m+k\}$ if $p(m)=0$ and $\forall t< m (p(m) \neq 0)$, if such an $m$ exists.
The characteristic function for the edges of $G$ is uniformly computable from $p$.  Suppose
$f$ is a solution of $\tgc{k}$ for $G$.  Then $f(0)=0$ if and only if $\forall k (p(k) \neq 0)$.  Also,
if there is a least $m$ such that $p(m)=0$, then $G_{m+k}$ is the first initial subgraph of $G$
with no $k$-coloring.  In this case, $f(0)=m+k$ and the solution to the $\lpo$ problem $p$ is
obtained by subtracting the fixed value $k$.

For the second reduction, suppose $G$ is a graph.  Let $p_0 (n) = 1$ if for all $t\le n$ the initial
subgraph $G_k$ has a $k$-coloring, and let $p_0 (n) = 0$ otherwise.  For $i>0$, define $p_i$ as follows.
Let $\sigma_1 , \sigma_2 , \dots$ be the finite sequences in $k^{<\nat}$
that start with $0$ and occur in the tree of initial segments of
$k$ colorings of $G$.  If there are only finitely many such sequences, pad the list with copies of the empty
sequence.  Let $p_i (m) = 1$ if $\sigma_i$ extends to a $k$-coloring of $G_m$, and let $p_i (m) = 0$
otherwise.  Given a solution to the $\widehat \lpo$ problem $\langle p_i \rangle_{i \in \nat}$, we can compute a
solution $f$ to $\tgc{k}$ for $G$ as follows.  Let $f(0) =\lpo (p_0 )$.  If $ f(0)>0$, let $f(n) = 0$ for all $n>0$.
Otherwise, use the values of $\lpo (p_n )$ for $n>0$ to enumerate the nodes in a path through the tree of partial colorings
and assign the values of $f$ to match the nodes in the path.
This enumeration is computable because if $\lpo (p_n) = 0$, then the tree of partial
$k$-colorings extending $\sigma_n$ is infinite, and there is a least $m$ such that there is a $j<k$ with
$\sigma_m=\sigma_n ^\frown j$ and $\lpo ( p_m ) = 0$.
\end{proof}

\begin{corollary}\label {TGC2}
$( i\rcaw )$  For $k\ge 2$, $\widehat {\tgc{k}} \weq \widehat \lpo$ and $\widehat  {\gck{k}} \weq \widehat \llpo$.
\end{corollary}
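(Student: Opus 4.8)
The plan is to obtain both equivalences by lifting the non-parallelized reductions already in hand through the hat operator, exploiting its monotonicity (Lemma \ref{HAT}) together with its idempotence (Lemma \ref{WKLH}). The guiding observation is that if a problem is squeezed between $\sf R$ and $\widehat{\sf R}$ in the Weihrauch ordering, then parallelizing it places it between $\widehat{\sf R}$ and $\widehat{\widehat{\sf R}}$, and the upper bound collapses back to $\widehat{\sf R}$ by idempotence, forcing the parallelization onto the $\widehat{\sf R}$ level.

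For the first equivalence I would start from Theorem \ref{TGC1}, which provides $\lpo \wlt \tgc{k} \wlt \widehat\lpo$ in $i\rcaw$. Applying Lemma \ref{HAT} to each of these reductions yields $\widehat\lpo \wlt \widehat{\tgc{k}}$ and $\widehat{\tgc{k}} \wlt \widehat{\widehat\lpo}$. Lemma \ref{WKLH} supplies $\widehat{\widehat\lpo} \weq \widehat\lpo$, so composing with Lemma \ref{TRANS} gives $\widehat{\tgc{k}} \wlt \widehat\lpo$ as well. The two reductions together deliver $\widehat{\tgc{k}} \weq \widehat\lpo$.

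For the second equivalence the work is essentially already carried out inside the proof of Lemma \ref{WKLH}, where the chain $\widehat{\gck{k}} \weq \widehat{\widehat\llpo} \weq \widehat\llpo$ is displayed. I would simply extract it: Theorem \ref{GCT} gives $\gck{k} \weq \widehat\llpo$, Lemma \ref{HAT} lifts this to $\widehat{\gck{k}} \weq \widehat{\widehat\llpo}$, and Lemma \ref{WKLH} replaces $\widehat{\widehat\llpo}$ by $\widehat\llpo$, so $\widehat{\gck{k}} \weq \widehat\llpo$ follows by transitivity.

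I do not anticipate a genuine obstacle; the corollary is a bookkeeping consequence of the earlier machinery. The one point deserving care is that Theorem \ref{TGC1} furnishes only a \emph{two-sided} bound, with $\lpo$ below and $\widehat\lpo$ above, rather than an outright equivalence, so unlike the $\gck{k}$ case one cannot parallelize a single equivalence. It is precisely the combination of this two-sided squeeze with the idempotence of the hat operator that pins $\widehat{\tgc{k}}$ at the $\widehat\lpo$ level.
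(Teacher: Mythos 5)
Your proposal is correct and is essentially the paper's own argument: the paper's (very terse) proof likewise applies parallelization (Lemma~\ref{HAT}) and idempotence of the hat operator (Lemma~\ref{WKLH}) to the two-sided reductions $\lpo \wlt \tgc{k} \wlt \widehat\lpo$ of Theorem~\ref{TGC1} and to the equivalence $\gck{k} \weq \widehat\llpo$ underlying Corollary~\ref{GCC2}, exactly as you do. Your write-up in fact makes explicit a detail the paper leaves implicit (its proof even cites ``Corollary~\ref{TGC2}'' itself, an evident slip for Theorem~\ref{TGC1}), namely that the squeeze between $\lpo$ and $\widehat\lpo$ collapses under parallelization precisely because $\widehat{\widehat\lpo} \weq \widehat\lpo$.
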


\begin{proof}
Apply parallelization and idempotence to the reductions in Corollary \ref{TGC2} and Corollary \ref{GCC2}.
\end{proof}

Because the associated predicates are $\exists$-free, the Weihrauch results of Corollary \ref{TGC2}
can be converted to a reverse mathematics result
after the fashion of Corollary \ref{GCC4}.

\begin{corollary}\label{TGC3}
$(i\rca )$  For $k\ge 2$, $\aca$ is equivalent to the assertion that
for any sequence $\seq{G_i}_{i \in \nat}$ of graphs, there
is a function $f: \nat \times \nat \to \nat$ such that for each $i$, either $f(i,0)=m>0$ and the $m^\text{th}$ initial subgraph
of $G_i$ is not $k$-colorable or $f(i,0)=0$ and the function $f(i,n)$ is
a $k$-coloring of $G_i$.
\end{corollary}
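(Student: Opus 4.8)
The plan is to follow the pattern of Corollaries~\ref{GHRM} and~\ref{GCC4}: convert the Weihrauch equivalence of Corollary~\ref{TGC2} into a second order equivalence and then descend by conservativity. First I would note that the assertion displayed in the statement is exactly the second order analog of $\widehat{\tgc{k}}$, obtained by conflating the sequence $\seq{f_i}_{i\in\nat}$ of $\tgc{k}$-solutions with the single function $f(i,n)=f_i(n)$, just as $\widehat\lpo$ and $\widehat{\lgk{k}}$ were identified with their second order forms earlier. Under this identification the disjunctive clause ``$f(i,0)=m>0$ and the $m^{\text{th}}$ initial subgraph of $G_i$ is not $k$-colorable, or $f(i,0)=0$ and $f(i,n)$ is a $k$-coloring of $G_i$'' is precisely the $\exists$-free predicate $\tgc{k}(G_i,f_i)$ holding at every index $i$, so the whole assertion is a $\Pi^1_2$ statement.

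Next I would assemble the equivalence inside $i\rcaw$. Corollary~\ref{TGC2} gives $\widehat{\tgc{k}}\weq\widehat\lpo$, witnessed by functionals of $i\rcaw$. Since the predicates for $\tgc{k}$ and $\lpo$ are $\exists$-free and $\exists$-freeness survives the universal quantification implicit in parallelization, the reduction prenex formulas $S^{\widehat{\tgc{k}}}_{\widehat\lpo}$ and $S^{\widehat\lpo}_{\widehat{\tgc{k}}}$ both lie in $\Gamma_1$. Applying Theorem~\ref{hm1} to the two Weihrauch reductions yields $i\rcaw\vdash S^{\widehat{\tgc{k}}}_{\widehat\lpo}$ and $i\rcaw\vdash S^{\widehat\lpo}_{\widehat{\tgc{k}}}$; since $S^{\sf P}_{\sf Q}\to({\sf P}\to{\sf Q})$ holds in intuitionistic predicate calculus, both implications, and therefore $\widehat{\tgc{k}}\leftrightarrow\widehat\lpo$, are provable in $i\rcaw$.

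I would then close the loop with the known characterization of $\widehat\lpo$. Corollary~\ref{GHRM} supplies $\aca\leftrightarrow\widehat\lpo$ over $i\rca$, hence over $i\rcaw$, and chaining this with the previous paragraph gives $i\rcaw\vdash\aca\leftrightarrow\widehat{\tgc{k}}$. Every formula in this equivalence is second order, so Theorem~\ref{cons} brings it down to $i\rca$, which is the content of the corollary.

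I expect the only delicate point to be the bookkeeping of the first paragraph: checking that, under the convention that $G_m$ is the subgraph on vertices $\{0,\dots,m\}$, the coordinatewise reading of the $\tgc{k}$ solution predicate matches the displayed assertion quantifier for quantifier, and that the resulting matrix stays $\exists$-free after parallelization. Once the translation is pinned down, the remaining steps are routine invocations of Theorem~\ref{hm1}, Corollary~\ref{GHRM}, and the conservativity Theorem~\ref{cons}, exactly as in the treatments of $\widehat\lpo$ and of Corollary~\ref{GCC4}.
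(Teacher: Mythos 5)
Your proposal is correct and follows exactly the route the paper intends: the paper gives no separate proof for this corollary, stating only that the $\exists$-free predicates allow the Weihrauch equivalence $\widehat{\tgc{k}}\weq\widehat\lpo$ of Corollary~\ref{TGC2} to be converted to reverse mathematics ``after the fashion of Corollary~\ref{GCC4},'' which is precisely your chain of Theorem~\ref{hm1}, the intuitionistic implication $S^{\sf P}_{\sf Q}\to({\sf P}\to{\sf Q})$, the $\aca\leftrightarrow\widehat\lpo$ equivalence of Corollary~\ref{GHRM}, and descent to $i\rca$ via the conservativity Theorem~\ref{cons}. Your attention to the second order translation of the parallelized problem and the preservation of $\exists$-freeness under the universal quantifier introduced by parallelization fills in the bookkeeping the paper leaves implicit.
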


The previous corollary again demonstrates the use of formal Weihrauch techniques to derive reverse mathematical results.
We close this section by illustrating a limitation of the formal approach.  In traditional Weihrauch analysis, we
can show that ${\sf P} \not \wlt {\sf Q}$ by showing that there are no computable functionals witnessing the reduction.
The formal setting does not distinguish between computable witnesses and other functionals, so there
is no analogous argument.  Consequently, results about strict Weihrauch reducibility are not amenable to formalization in $i\rcaw$.

\begin{corollary}\label {TGC4}
For $k\ge 2$, $i\rcaw$ proves $\gck{k} \wlt \tgc{k}$.  In the non-formal setting, $\gck{k}\wslt \tgc{k}$.
\end{corollary}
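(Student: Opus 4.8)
The statement has two parts requiring two different kinds of argument, and I would treat them separately. For the formal reduction $i\rcaw \vdash \gck{k} \wlt \tgc{k}$, the plan is to exploit the fact that $\gck{k}$ and $\tgc{k}$ differ only in their accepted inputs: $\gck{k}$ accepts only locally $k$-colorable graphs, whereas $\tgc{k}$ accepts all graphs. First I would take both the forward and the backward functional to be the identity. Given a $\gck{k}$-input $G$ (so every $G_m$ has a $k$-coloring), pass $G$ unchanged to $\tgc{k}$ and take any $\tgc{k}$-solution $f$. Local $k$-colorability makes the clause $f(0)>0$ impossible, so $f(0)=0$ and $f$ is itself a $k$-coloring of $G$, i.e.\ a $\gck{k}$-solution; hence the backward functional can just return $f$. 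I would record this as a proof of the reduction prenex formula $S^{\tgc{k}}_{\gck{k}}$ in $i\rca$. Since the constituent predicates—local $k$-colorability, the solution predicate of $\gck{k}$, and the solution predicate of the total problem $\tgc{k}$—are all $\exists$-free, the formula $S^{\tgc{k}}_{\gck{k}}$ lies in $\Gamma_1$, so Theorem~\ref{hm1} converts its provability into $i\rcaw \vdash \gck{k} \wlt \tgc{k}$.

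For the strict separation in the non-formal setting I would argue by contradiction, leveraging equivalences already established. Suppose $\tgc{k} \wlt \gck{k}$. By Theorem~\ref{GCT} we have $\gck{k} \weq \wkl$, so transitivity of (non-formal) Weihrauch reduction gives $\tgc{k} \wlt \wkl$. By Theorem~\ref{TGC1} we have $\lpo \wlt \tgc{k}$, and chaining these yields $\lpo \wlt \wkl$. This contradicts the well-known incomparability of $\lpo$ and $\wkl$ in the Weihrauch lattice, in particular the fact that $\lpo \not\wlt \wkl$ (see \cite{bgp}). Thus $\tgc{k} \not\wlt \gck{k}$, and combined with the first part this yields $\gck{k} \wslt \tgc{k}$.

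The main obstacle is the second part, and it is exactly where the non-formal setting is essential, as the closing remarks of this section anticipate: no argument inside $i\rcaw$ can establish strictness, since the reductions extracted via Theorem~\ref{hm1} do not distinguish computable witnesses from arbitrary functionals. Two points need care. First, I must confirm that the reductions imported from Theorems~\ref{GCT} and \ref{TGC1} are genuine Weihrauch reductions in the computability-theoretic sense; this holds because the witnessing functionals in those proofs are explicitly computable (equivalently, are the terms extracted from the formal proofs by proof mining). Second, the argument rests on the external fact $\lpo \not\wlt \wkl$, which I would cite rather than reprove; a direct separation is nonetheless available by a standard continuity argument, since any putative reduction of $\lpo$ to $\wkl$ would have to compute the single output bit continuously from the input together with an arbitrary path through the constructed tree, and perturbing the input far beyond the use of that computation while preserving a common path forces an incorrect answer.
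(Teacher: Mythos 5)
Your proof is correct, and its two halves compare differently with the paper's. For the formal half you and the paper do essentially the same thing: the identity functionals witness $\gck{k} \wlt \tgc{k}$. Your observation that a solution $f$ of $\tgc{k}$ for a locally $k$-colorable $G$ must have $f(0)=0$ (intuitionistically unproblematic, since equality of natural numbers is decidable) is exactly the needed verification; the paper simply asserts that the identity functionals work, while you route the argument through $S^{\tgc{k}}_{\gck{k}}$ and Theorem~\ref{hm1}, a harmless detour. For the strictness half your route genuinely differs. The paper parallelizes the hypothetical reduction $\tgc{k} \wlt \gck{k}$, applies Corollary~\ref{TGC2} to obtain $\widehat{\lpo} \wlt \widehat{\llpo}$, and cites Theorems 7.23, 7.40, and 7.42 of \cite{bgp} (in effect, $\lim \not\wlt \wkl$) for the contradiction. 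You stay unparallelized: from Theorem~\ref{GCT} ($\gck{k}\weq\wkl$) and Theorem~\ref{TGC1} ($\lpo \wlt \tgc{k}$) you derive $\lpo \wlt \wkl$ and cite $\lpo \not\wlt \wkl$. Both contradictions rest on standard separations from the literature; note that the fact you invoke is the stronger of the two (since $\lpo \wlt \widehat{\lpo}$, your fact implies the paper's, not conversely), though it is equally citable. Your approach buys a shorter chain with no appeal to Lemma~\ref{HAT} or idempotence; the paper's lands the contradiction at the parallelized level, where the separation of $\lim$ from $\wkl$ is the most frequently quoted one. You are also right to flag, as the paper leaves implicit, that the formally proven equivalences transfer to the non-formal setting because their witnessing functionals are explicitly computable.

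One caveat: the ``direct continuity argument'' you sketch as a backup for $\lpo \not\wlt \wkl$ does not work as stated. Perturbing the input $p$ beyond the use of $\Psi$ changes the tree $\Phi(p)$, so the path you had been feeding to $\Psi$ need not be a path of the new tree; this is precisely the mechanism by which $\wkl$ computes discontinuous problems such as $\llpo$. A correct direct proof needs compactness of the path space $[\Phi(p)]$ to obtain a use bound that is uniform over all paths, together with an argument disposing of nodes on dead branches, before the perturbation step goes through. Since you cite the known fact rather than rely on this sketch, the proof stands, but the sketch should not be offered as an easy alternative.
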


\begin{proof}
$i\rcaw$ proves that $\gck{k} \wlt \tgc{k}$ using the identity functionals as witnesses.  To prove the
strict inequality of the second sentence, we may use classical logic and results from the
Weihrauch reducibility literature.
Suppose by way of contradiction that $\tgc{k} \wlt \gck{k}$.  Parallelization preserves the reduction.
By Corollary \ref{TGC2} and transitivity, $\widehat \lpo \wlt \widehat \llpo$, contradicting 
widely known results (e.g.~Theorems 7.23, 7.40, and 7.42 in \cite{bgp}).\end{proof}

\section{Weihrauch analysis related to $\poo$}

In this section, we apply more traditional techniques of Weihrauch analysis to 
problems from \cite{hl}.
The underlying formulas are not $\exists$-free, so the techniques
of the preceding section are not applicable.
The problems are all related to $\poo$.
The first is based on Lemma VI.1.1 of Simpson \cite{simpson}.

\begin{dWF}(Well founded trees):
Given a tree $T$ in $\nat^{<\nat}$ as input, output $0$ if $T$ contains an infinite path
and $1$ if it does not.
\end{dWF}
The next two problems are based on principles included in Theorem 3.4 of Hirst and Lempp \cite{hl}.

\begin{dS}(Isomorphic subgraph): %Q2 in earlier drafts of this section.
Given inputs of graphs $G$ and $H$, output $1$ if $H$ is isomorphic to a subgraph of $G$ and $0$ if it is not.
\end{dS}

Let $L$ denote the linear graph with vertex set $V = \{ v_i \mid i \in \nat\}$ and edges $E= \{(v_i , v_{i+1} )\mid i \in \nat \}$.

\begin{dSL}($L$ as a subgraph):
Given a graph $G$ as input, output $1$ if $L$ is isomorphic to a subgraph of $G$ and $0$ if it is not.
\end{dSL}

\begin{theorem}\label{PW1}
$\wf \sweq {\sf S}_L \sweq {\sf S}$.
\end{theorem}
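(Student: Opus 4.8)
The plan is to prove both equivalences at once by exhibiting a cycle of strong reductions, $\wf \swlt {\sf S}_L \swlt {\sf S} \swlt \wf$; since $\swlt$ is a transitive preorder, this makes all three problems strongly Weihrauch equivalent. Every problem here produces a single output bit from a single input, so the backward functional $\Psi$ will be either the identity or the bit-flip $b \mapsto 1-b$, and for each reduction I need only check that the forward functional $\Phi$ is computable and that the output bits align.

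The easy step is ${\sf S}_L \swlt {\sf S}$: given a graph $G$, set $\Phi(G) = (G,L)$ and take $\Psi$ to be the identity, since ${\sf S}(G,L)$ is by definition ${\sf S}_L(G)$. For ${\sf S} \swlt \wf$, given graphs $G$ and $H$ (each on vertex set $\nat$), let $\Phi(G,H)$ be the tree $T \subseteq \nat^{<\nat}$ whose nodes are the finite sequences $\langle g_0, \dots, g_{n-1}\rangle$ of distinct vertices of $G$ for which $h_i \mapsto g_i$ is an edge-preserving injection of the first $n$ vertices of $H$. This $T$ is closed under initial segments and is computable from $(G,H)$, and an infinite path through it assigns an image to every vertex of $H$ in order, so $T$ has an infinite path exactly when $H$ is isomorphic to a subgraph of $G$. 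Hence $\wf(T)=0$ iff ${\sf S}(G,H)=1$, and $\Psi(b)=1-b$ completes the reduction. The standing convention that all graphs have vertex set $\nat$ is essential here, since it guarantees that a subgraph isomorphic to $H$ uses infinitely many vertices, so that an embedding really corresponds to an infinite path rather than to a maximal finite node.

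The heart of the argument is $\wf \swlt {\sf S}_L$. Given a tree $T$ in $\nat^{<\nat}$, let $\Phi(T)$ be the \emph{tree graph} $G$: its vertices are the nodes of $T$ (re-encoded onto $\nat$, padding with isolated vertices when $T$ is finite), with an edge joining $\sigma$ to each immediate successor $\sigma \cat \langle a \rangle$ present in $T$. An infinite branch of $T$ is visibly an infinite ray in $G$, so $L$ embeds into $G$ whenever $T$ has an infinite path. The substantial direction is the converse, showing that any embedding of $L$ into $G$ forces an infinite branch, and this is where the combinatorial content lies. I would argue that, because $G$ is acyclic, an infinite ray $w_0, w_1, \dots$ cannot visit any fixed depth level infinitely often: two distinct nodes of the same depth are joined in a tree only through a strictly shallower common ancestor, which a ray eventually confined to greater depths can never reach. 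It follows that the depths $|w_i|$ tend to infinity, and that for each $D$ the ray is eventually trapped inside a single subtree rooted at a depth-$D$ node $\sigma_D$; the nested nodes $\sigma_0 \subset \sigma_1 \subset \cdots$ then form an infinite branch of $T$. This yields ${\sf S}_L(G)=1$ iff $\wf(T)=0$, so once more $\Psi(b)=1-b$ witnesses a strong reduction.

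I expect the verification of this ray-trapping claim, rather than the constructions of the forward functionals, to be the main obstacle; the remaining bookkeeping (computability of the encodings, the re-indexing onto $\nat$, and the behavior on finite or path-free infinite trees) is routine. Assembling the three reductions and appealing to transitivity of $\swlt$ then gives $\wf \sweq {\sf S}_L \sweq {\sf S}$.
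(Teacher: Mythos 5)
Your proposal is correct and takes essentially the same route as the paper: the identical cycle of strong reductions $\wf \swlt {\sf S}_L \swlt {\sf S} \swlt \wf$, using the same three constructions (the tree-as-graph for $\wf \swlt {\sf S}_L$, specialization $H = L$ for ${\sf S}_L \swlt {\sf S}$, and the tree of partial embeddings for ${\sf S} \swlt \wf$). The only difference is that you spell out the verification that a copy of $L$ in the tree graph forces an infinite branch, which the paper merely asserts; your depth argument works, though it can be shortened by noting that injectivity forbids a ray from reversing a parent-to-child step, so its tail must descend along a single branch.
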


\begin{proof}  First we will show that $\wf \swlt {\sf S}_L$.
Any tree $T$ in $\nat^{<\nat}$ can uniformly be converted to a corresponding graph, with the nodes of the tree as vertices and
edges between neighboring nodes.
Note that the tree $T$ contains an infinite path if and only if the linear graph $L$ is isomorphic
to a subgraph of the graph of $T$.  The output $1-{\sf S}_L (T)$ is equal to $\wf (T)$, so the post-processing does not
depend on $T$ and the reduction is strong.

The problem  ${\sf S}_L $ is a special case of $\sf S$, so it only remains
to show that ${\sf S} \swlt \wf$.  Suppose $G$ and $H$ are input graphs with
vertices $\{ g_i \mid i \in \nat \}$ and $\{h_i \mid i \in \nat\}$.
We can uniformly compute the tree $T$ of initial segments of isomorphisms between $H$ and subgraphs of $G$,
where nodes correspond to sequences $g_{i_0} , \dots , g_{i_n}$ such that the pairing of nodes $g_{i_j}$ and $h_j$
for $j\le n$ is an isomorphism of the induced subgraph of $H$ with a subgraph of $G$.  Any infinite path through $T$
is an isomorphism between $H$ and a subgraph of $G$.  Thus $1-\wf (T)$ is equal to ${\sf S}(G,H)$, yielding the
final strong Weihrauch reduction.
\end{proof}

Theorem 3.4 of \cite{hl} discusses sequences of graphs and trees.  Similar Weihrauch reductions follow by parallelization.

\begin{corollary}\label{PW2}
$\wfh \sweq \widehat {\sf S}_L \sweq \widehat {\sf S}$.
\end{corollary}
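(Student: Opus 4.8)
The plan is to derive Corollary \ref{PW2} from Theorem \ref{PW1} by applying the parallelization operator to each of the three strong Weihrauch equivalences established there. The key fact I would invoke is that parallelization is monotone with respect to strong Weihrauch reducibility: if ${\sf P} \swlt {\sf Q}$ via witnesses, then $\widehat{\sf P} \swlt \widehat{\sf Q}$ via the componentwise lifting of those witnesses. This is the strong-reduction analog of Lemma \ref{HAT}, and the proof is essentially the same construction carried out coordinatewise, now tracking that the output post-processing introduced no dependence on the oracle answers beyond the input coordinate.

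First I would note that $\wfh$, $\widehat{{\sf S}_L}$, and $\widehat{\sf S}$ are by definition the parallelizations of $\wf$, ${\sf S}_L$, and $\sf S$ respectively, so the statement is literally the image of Theorem \ref{PW1} under the hat operator. Second, I would record that strong Weihrauch reducibility is preserved under parallelization in both directions, so each strong equivalence $\sweq$ in Theorem \ref{PW1} lifts to a strong equivalence of the parallelized problems. Applying this to $\wf \sweq {\sf S}_L$ yields $\wfh \sweq \widehat{{\sf S}_L}$, and applying it to ${\sf S}_L \sweq {\sf S}$ yields $\widehat{{\sf S}_L} \sweq \widehat{\sf S}$. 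Chaining these through transitivity of $\sweq$ gives the full string of equivalences claimed.

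The only point requiring any care is the claim that strong reducibility survives parallelization, since strong reductions forbid the output functional from consulting the original input. In the componentwise lift, the output functional on the $i$th coordinate uses only the $i$th oracle answer, so no forbidden dependence is introduced; this is exactly why Theorem \ref{PW1} was stated with $\sweq$ rather than merely $\weq$. The reductions in that theorem were each explicitly verified to be strong (the post-processing was a fixed map $x \mapsto 1-x$ independent of the input), so the lift is clean.

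I expect no substantive obstacle here; the entire content of the corollary is the observation that $\widehat{\,\cdot\,}$ is a monotone operation on the strong Weihrauch degrees, together with transitivity. The main thing to get right is simply to state the parallelization-monotonicity fact for $\swlt$ explicitly before invoking it, rather than silently reusing Lemma \ref{HAT}, which was formulated for ordinary $\wlt$ in the formal system $i\rcaw$; here we are in the non-formal classical setting, so I would cite the standard preservation result (part 3 of Proposition 3.6 of \cite{bgp}) directly.
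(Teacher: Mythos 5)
Your proposal is correct and takes essentially the same route as the paper: both derive the corollary by citing the fact that parallelization is a strong closure operation with respect to strong Weihrauch reducibility (Proposition 3.6 part 3 of \cite{bgp}) and applying it to the equivalences of Theorem \ref{PW1}. Your extra remarks on why the componentwise lift preserves strongness, and on citing \cite{bgp} rather than Lemma \ref{HAT} in the non-formal setting, are sound elaborations of the same argument rather than a different approach.
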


\begin{proof}
As noted in Proposition 3.6 part 3 of \cite {bgp}, parallelization is a strong closure operation with respect
to strong Weihrauch reducibility.  The parallelized equivalences follow immediately from the equivalences in
Theorem \ref{PW1}.
\end{proof}

The proof of Theorem 3.4 of \cite{hl} indicates that an instance of $\wfh$ can be related to isomorphic subgraph
problems for a single target graph $G$.  Let $L_n$ denote the linear graph $L$ with a cycle of size $n+3$ appended
to the first vertex.  For example, $L_0$ is a copy of $L$ with a triangle attached as a tag to the first vertex.

\begin{dSLv}(Tagged linear subgraphs of a fixed graph):
Given a graph $G$ as input, output a function $s: \nat \to 2$ such that $s(n) = 1$ if and only if $L_n$ is a subgraph of $G$.
\end{dSLv}

\begin{theorem}\label{PW3}
$\wfh \sweq \slv$.
\end{theorem}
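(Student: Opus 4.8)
The plan is to establish $\wfh \sweq \slv$ by exhibiting strong Weihrauch reductions in both directions, in each case using an input-independent coordinatewise bit-flip as the post-processing functional. The guiding observation is that $L_n \subseteq G$ should correspond to the $n$-th tree having an infinite path, so that the $\slv$-output $s$ and the $\wfh$-output $w$ are related by $s(n) = 1 - w(n)$; this makes the back-translation uniform and independent of the original instance, which is exactly what strong reducibility requires.

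First I would prove $\slv \swlt \wfh$ by reusing the tree-of-partial-isomorphisms construction from the proof of Theorem~\ref{PW1}. Given a graph $G$, fix an enumeration of the (infinitely many) vertices of $L_n$ and uniformly build the tree $T_n$ whose nodes are finite sequences of vertices of $G$ coding initial segments of embeddings of $L_n$ into $G$. An infinite path through $T_n$ is precisely an embedding of $L_n$ as a subgraph of $G$, so $T_n$ has an infinite path iff $L_n \subseteq G$, that is, iff $\wf(T_n) = 0$. The sequence $\seq{T_n}_{n \in \nat}$ is an instance of $\wfh$, and the functional $\Psi$ defined by $\Psi(w)(n) = 1 - w(n)$ recovers the $\slv$-solution from any $\wfh$-solution $w$. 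Since $\Psi$ never consults $G$, the reduction is strong.

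For the converse $\wfh \swlt \slv$, given an instance $\seq{T_i}_{i \in \nat}$ of $\wfh$, I would convert each $T_i$ to its tree graph (nodes as vertices, edges between neighboring nodes) as in Theorem~\ref{PW1}, attach a fresh cycle of size $i+3$ to the root, and let $G$ be the disjoint union of these components. Within the $i$-th component the root lies on the attached cycle and begins an infinite ray iff $T_i$ has an infinite branch, so $L_i$ embeds there exactly when $T_i$ has a path. The output $s$ of $\slv$ on $G$ then satisfies $s(n) = 1$ iff $T_n$ has a path, and once more the flip $\Psi(s)(n) = 1 - s(n)$ yields the $\wfh$-solution independently of the input, giving a strong reduction.

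The main obstacle is verifying the correctness of this last construction, namely that no spurious copy of $L_n$ arises elsewhere in $G$. Here I would lean on two facts: the tree graphs are acyclic, so every cycle in $G$ comes from exactly one attached tag, and the tag sizes $i+3$ are pairwise distinct, so the only cycle of length $n+3$ in $G$ sits in the $n$-th component. Since the vertex $v_0$ of $L_n$ lies on the cycle and also carries the infinite ray, any embedding must send $v_0$ to a cycle vertex of degree at least three; the only such vertex is the root of the $n$-th component, which forces the ray into that tree and hence requires an infinite branch of $T_n$ (the other cycle vertices having degree two and the ray vertices being forbidden to reuse cycle vertices). This pins $L_n \subseteq G$ to the single question of whether $T_n$ has a path, and completes the equivalence.
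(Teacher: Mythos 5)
Your proof is correct and follows essentially the same route as the paper: your forward direction is exactly the paper's reduction $\slv \swlt \widehat{\sf S} \sweq \wfh$ with Corollary~\ref{PW2} and the tree-of-partial-embeddings construction of Theorem~\ref{PW1} inlined, and your reverse direction (cycle tags of size $i+3$ attached to the roots, disjoint union, input-independent bit flip) is identical to the paper's construction. Your added verification that no spurious copy of $L_n$ can arise --- uniqueness of cycle lengths plus the degree-three argument forcing $v_0$ to the root --- is a correct elaboration of details the paper leaves implicit.
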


\begin{proof}  Note that an input $G$ for $\slv$ corresponds exactly to the input $\seq{G, L_i} _{i \in \nat}$ for $\widehat {\sf S}$.
Thus $\slv \swlt {\widehat {\sf S}} \sweq \wfh$.  To show that $\wfh \swlt \slv$, we adapt the reversal from Theorem 3.4 of \cite{hl}.
Let $\seq{T_i}_{i \in \nat}$ be an input for $\wfh$.  For each $i$, let $G_i$ be the graph of $T_i$ with a cycle of size $i+3$ attached
to the root node.  The graph $G$ consisting of the disjoint union of the $G_i$ graphs is uniformly computable from
$\seq{T_i}_{i \in \nat}$.  Note that $L_i$ is isomorphic to a subgraph of $G$ if and only if $L_i$ is isomorphic to a subgraph
of $G_i$, which occurs exactly when $T_i$ is not well founded.  If $s:\nat \to 2$ is a solution to $\slv$ for this graph $G$, then
$w(n)=1-s(n)$ is a solution of $\wfh$ for $\seq{T_i}_{i \in \nat}$, completing the proof that $\wfh \swlt \slv$.
\end{proof}

Many other graphs could be substituted for the linear graph $L$ in the preceding discussion.  However, not all graphs
yield the same results.  For example, while ${\sf S}_L \sweq \wf$, finite graphs yield a weaker Weihrauch problem.

\begin{theorem}\label{PW4}
Suppose that $F$ is a finite graph with at least two vertices.  Then ${\sf S}_F \weq \lpo$.
\end{theorem}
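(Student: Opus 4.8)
The plan is to prove both reductions ${\sf S}_F \wlt \lpo$ and $\lpo \wlt {\sf S}_F$, which together give ${\sf S}_F \weq \lpo$. The guiding observation is that, because $F$ is finite, the assertion ``$F$ is isomorphic to a subgraph of $G$'' is a $\Sigma^0_1$ property of $G$: it holds exactly when some injection of the $r = |V(F)|$ vertices of $F$ into $\nat$ carries every edge of $F$ to an edge of $G$. Deciding a single $\Sigma^0_1$ condition of this form is precisely the work done by one instance of $\lpo$, which accounts for the forward reduction; the reverse reduction encodes the search for a zero of an $\lpo$ instance as a containment question for a single copy of $F$.

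For ${\sf S}_F \wlt \lpo$, I would, given a graph $G$, computably enumerate all injections $\phi_0, \phi_1, \dots$ of the vertices of $F$ into $\nat$ (as tuples of distinct vertices) and define $q(n) = 0$ if $\phi_n$ maps every edge of $F$ to an edge of $G$, and $q(n) = 1$ otherwise. Each test is a finite check against the edge function of $G$ because $F$ is fixed and finite, so $q$ is uniformly computable from $G$. Then $F$ embeds in $G$ iff $q$ takes the value $0$ somewhere iff $\lpo(q) > 0$, so the forward functional sends $G$ to $q$ and the backward functional sends the value $v = \lpo(q)$ to the bit that is $1$ when $v > 0$ and $0$ when $v = 0$. (This reduction is in fact strong, but Weihrauch equivalence is all that is claimed.)

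For $\lpo \wlt {\sf S}_F$, fix an edge $e$ of $F$ and, given an instance $p$, build $G_p$ on $\nat$ split into consecutive blocks $B_0, B_1, \dots$ of size $r$. Place a copy of $F$ on $B_m$ when $p(m) = 0$ and leave $B_m$ edgeless when $p(m) \neq 0$, with no edges between distinct blocks; the resulting edge characteristic function is uniformly computable from $p$. If $p$ never takes the value $0$ then $G_p$ has no edges, so $F$, which contains $e$, cannot embed and ${\sf S}_F(G_p) = 0$; if $p(m) = 0$ for some $m$ then $B_m$ already carries a copy of $F$ and ${\sf S}_F(G_p) = 1$. To produce the $\lpo$ output, the backward functional reads the returned bit together with the original $p$: on bit $0$ it outputs $0$, and on bit $1$ it searches $p$ for its first zero --- a search guaranteed to terminate because the bit certifies one exists --- and outputs that index incremented by one, as required by the definition of $\lpo$.

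The routine parts are the uniform computability of $q$ and of the edge function of $G_p$. The step needing the most care is the reverse direction, where ${\sf S}_F$ returns only a single bit while a solution of $\lpo(p)$ must pinpoint the first zero of $p$. The resolution I would emphasize is that a Weihrauch backward functional may consult the original input $p$, so once the bit certifies that a zero exists the first one can be found by an unbounded search that is now guaranteed to halt. The only other delicate point is ruling out accidental copies of $F$ in the no-zero case, which the edgeless filler handles cleanly precisely because $F$ carries the edge $e$.
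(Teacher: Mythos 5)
Your two reductions are sound for the case they actually cover, and there they follow the paper's own proof quite closely; but the theorem as stated admits \emph{any} finite $F$ with at least two vertices, including a completely disconnected (edgeless) $F$, and your reverse reduction begins by fixing an edge $e$ of $F$, which does not exist in that case. This is not a removable convenience: for edgeless $F$ your block construction produces the same edgeless graph $G_p$ no matter what $p$ is, so ${\sf S}_F(G_p)$ carries no information about $p$ and the reduction $\lpo \wlt {\sf S}_F$ collapses. The paper treats this case separately with a complementary construction: edges are added to $G$ (building toward a complete graph) until a zero of $p$ is discovered, after which no further edges are added; then $F$ sits inside $G$ exactly when $p$ has a zero, and the first zero can again be located by an unbounded search that is guaranteed to halt.

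The edgeless case also exposes a mis-reading at the start of your proposal. You take ``$F$ is isomorphic to a subgraph of $G$'' to mean that some injection carries every edge of $F$ to an edge of $G$ (subgraph embedding). Under that reading, an edgeless $F$ on two vertices is isomorphic to a subgraph of \emph{every} input graph, so ${\sf S}_F$ would be computable (constantly $1$) and the theorem would simply be false; the statement is only true if the copy of $F$ must appear as an induced subgraph, with non-edges of $F$ preserved as well. This induced reading is the one consistent with the paper's case analysis here and with its proof of Theorem \ref{PW5}, and it is why the paper later distinguishes isomorphism from Pauly's subgraph embedding ${\sf SE}$. Concretely, your forward reduction must strengthen its finite test: set $q(n)=0$ only when $\phi_n$ maps edges to edges \emph{and} non-edges to non-edges. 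With that repair your forward direction is fine, and your reverse direction is fine whenever $F$ has an edge (there the placed copy of $F$ is automatically induced, since the rest of $G_p$ is edgeless); the genuinely missing piece is the complementary construction for edgeless $F$.
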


\begin{proof}
We will prove the result for the version of $\lpo$ from section \S\ref{section2}.
For other versions of $\lpo$, the result can be strengthened to strong Weihrauch reduction.
Fix a finite graph $F$.  To show that ${\sf S}_F \wlt \lpo$, let $G$ be an input graph and
construct an input for $\lpo$ as follows.  For each $n$, if $F$ is not isomorphic to a subgraph of $G$ restricted
to the first $n$ vertices of $G$, set $p(n)=1$.  If $F$ is isomorphic to such a subgraph, set $p(n)=0$.  Thus
${\sf S}_F (G) = 1$ if and only if $\lpo (p) > 0$.

To prove that $\lpo \wlt {\sf S}_F$, we will consider two cases.  First suppose that $F$ is a finite graph with
$j$ vertices and at least one edge.  Let $p$ be an instance of $\lpo$.  Let $G$ be the graph with vertices
$\{v_i \mid i \in \nat \}$, and with edges corresponding to a copy of $F$ on vertices $v_m , \dots , v_{m+j-1}$
if $m$ is the least value such that $p(m)= 0$.  If no such $m$ exists, then $G$ is completely disconnected.
Note that each edge of $G$ depends only on an initial segment of the values of $p$, so $G$ is uniformly
computable from $p$.  If ${\sf S}_F (G) = 0$, output $0$ for $\lpo(p)$.
If ${\sf S}_F (G) = 1$, then the search for the least $m$ such that $p(m)=0$ will
succeed and the appropriate value of $\lpo (p)$ can be output.

The case when $F$ is completely disconnected is similar, using complementary graphs.  All possible edges
are added to $G$ until a zero of $p$ is discovered, at which point no additional edges are added.
\end{proof}

The fact that ${\sf S}_F \weq \lpo$ and ${\sf S}_L \weq \wf$ motivates the following question:  Is there a graph
$H$ such that $\lpo \wslt {\sf S}_H \wslt \wf$?  
Arno Pauly says yes, if we switch from isomorphism to subgraph embedding ({\sf SE}).
For the graph $H$ consisting of infinitely many copies of K3, $ {\sf SE}_H\equiv_{\sf W} \lpo^\prime$  \cite{pauly}.
Switching to embeddings does not change Theorem \ref{PW1}, for example.  Reed Solomon has
asked what other reductions differ in strength for various notions of embedding.

We can also consider the problem ${\sf S}^G$ for a fixed graph $G$, asking whether or not an input graph is isomorphic
to a subgraph of $G$.  For the complete graph $K$ and the totally disconnected graph $D$ it is easy to show that
${\sf S}^K$ and ${\sf S}^D$ are both Weihrauch equivalent to $\lpo$.  Theorem 3.2 of \cite {hl} shows that there is a
computable graph $G$ such that the set of indices of computable graphs that are isomorphic to a subgraph of $G$ is
$\Sigma^1_1$ complete.  This prompts us to ask:  Is there a graph $G$ such that ${\sf S}^G \sweq \wf$?
Indeed, is there even a graph $G$ such that $\lpo \wslt {\sf S}^G$?

Theorem 2.6 of \cite{hl} shows a connection between sequential versions of the following problems.
For a graph $G$ with vertex set $V$, we say $c:V\to \nat$ is a coloring of $G$ if whenever $(v_1 , v_2)$ is
an edge of $G$, $c(v_1 ) \neq c(v_2 )$.

\begin{dRC} (Repeated color):
Given a graph $G$ as an input, output $1$ if there is a coloring of $G$ that uses one color infinitely often and
output $0$ if there is no such coloring.
\end{dRC}

\begin{dD}(Disconnected subgraph):
Given a graph $G$ as an input, output $1$ if $G$ has an infinite completely disconnected subgraph and output
$0$ otherwise.
\end{dD}

\begin{theorem}\label{PW5}
${\sf RC} \sweq {\sf D} \sweq \wf$.
\end{theorem}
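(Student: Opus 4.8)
The plan is to prove the two equivalences separately. I would first dispatch ${\sf RC} \sweq {\sf D}$ by showing the two problems return the same answer on every input, and then prove ${\sf D} \sweq \wf$ by a pair of tree-versus-graph translations whose post-processing is in each case the bit flip $b \mapsto 1-b$.

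For ${\sf RC} \sweq {\sf D}$, the key observation is that a graph $G$ admits a proper coloring using some single color infinitely often if and only if $G$ contains an infinite completely disconnected subgraph. Indeed, the vertices receiving any fixed color in a proper coloring form an independent set, so a color used infinitely often produces an infinite disconnected subgraph; conversely, given an infinite completely disconnected subgraph $I$, one colors every vertex of $I$ with $0$ and gives each remaining vertex its own distinct positive color, yielding a proper coloring that uses $0$ infinitely often. Hence ${\sf RC}(G)={\sf D}(G)$ for every $G$, and the identity functionals witness the strong equivalence.

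For ${\sf D} \swlt \wf$, given a graph $G$ I would form the tree $T$ in $\nat^{<\nat}$ whose nodes are the strictly increasing finite sequences of vertices that are pairwise nonadjacent in $G$. This tree is uniformly computable from $G$ and closed under initial segments. An infinite path through $T$ enumerates an infinite disconnected subgraph of $G$, and conversely any infinite disconnected subgraph listed in increasing order is an infinite path, so $T$ has an infinite path exactly when $G$ has an infinite disconnected subgraph. Thus ${\sf D}(G)=1-\wf(T)$, and the bit flip does not depend on $G$, so the reduction is strong. For the reverse reduction $\wf \swlt {\sf D}$, given a tree $T$ I would build the incomparability graph $G$ on vertex set $\nat$: two codes that both name nodes of $T$ are joined by an edge precisely when the corresponding sequences are incomparable under extension, while every code that does not name a node of $T$ is joined to all other vertices. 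An independent set of $G$ can contain no code lying outside $T$, so it is a set of pairwise comparable nodes, that is, a chain; an infinite chain has nodes of unbounded length and hence unions to an infinite path, while the initial segments of any path form an infinite chain. Therefore $G$ has an infinite disconnected subgraph iff $T$ has an infinite path, giving $\wf(T)=1-{\sf D}(G)$, again strong. The theorem then follows by combining these reductions with ${\sf RC}\sweq{\sf D}$ and the transitivity of $\sweq$.

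I expect the main obstacle to be the bookkeeping in the $\wf \swlt {\sf D}$ direction. Because ${\sf D}$ accepts graphs on the fixed vertex set $\nat$ whereas the nodes of $T$ occupy only some of the codes, I must guarantee that the stray codes lying outside $T$ cannot join an infinite independent set, which is the reason for making them adjacent to everything, and I must confirm that an infinite independent set really reflects an infinite path rather than stalling at a bounded level. Verifying that an infinite chain in $T$ must have nodes of unbounded length, so that its union is a genuine infinite branch, is the crux of this step; once it is in place, both translations and all the post-processing maps are manifestly input-independent, so every reduction obtained is strong.
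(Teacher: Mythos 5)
Your proof is correct, and its skeleton matches the paper's: the ${\sf RC} \sweq {\sf D}$ step is the same color-class/independent-set observation (the paper states it in one sentence; you supply the details, including the proper coloring built from an independent set), and your $\wf \swlt {\sf D}$ reduction is exactly the paper's construction, the incomparability graph of the tree, with the same chain-to-path argument. The one genuine organizational difference is the direction ${\sf D} \swlt \wf$: the paper does not build a tree at all, but instead notes that ${\sf D} \sweq {\sf S}_D$ (where $D$ overloads notation for the totally disconnected graph), that ${\sf S}_D$ is a special case of ${\sf S}$, and then cites Theorem \ref{PW1} for ${\sf S} \sweq \wf$. You inline this step by constructing the tree of increasing finite sequences of pairwise nonadjacent vertices of $G$; that tree is precisely the tree of partial isomorphisms of $D$ into $G$ that appears inside the proof of Theorem \ref{PW1}, so the mathematical content coincides, but your version is self-contained while the paper's reuses established machinery. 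Your treatment is also more careful on one point the paper glosses over: since inputs to ${\sf D}$ are graphs on vertex set $\nat$ while the nodes of $T$ occupy only some codes, the stray codes must be prevented from entering an infinite independent set, which you arrange by joining them to every vertex; the paper simply declares the vertices of its graph to be the nodes of $T$. Both routes yield strong reductions, since in every case the post-processing is the input-independent bit flip $b \mapsto 1-b$.
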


\begin{proof}
Any graph $G$ has a coloring as required by $\sf RC$ if and only if it has a subgraph as in $\sf D$.  Thus
${\sf RC} \sweq {\sf D}$.  Overloading notation and using $D$ to denote the totally disconnected graph,
we have ${\sf D} \sweq {\sf S}_D$.  The problem ${\sf S}_D$ is a special case of ${\sf S}$, so
${\sf D} \swlt {\sf S} \sweq \wf$.

To show that $\wf \swlt {\sf S}_D$, given a tree $T$ as input, define a graph $G$ as follows.  The vertices of
$G$ will consist of the nodes of $T$.  A pair $(v_1 , v_2 )$ is an edge in $G$ if and only if the corresponding
nodes of $T$ are incomparable.  (Nodes of $T$ are comparable if one extends the other in the tree.)  $G$ is
uniformly computable from $T$.  The completely disconnected graph $D$ is isomorphic to a subgraph of
$G$ if and only if $T$ contains an infinite collection of pairwise comparable nodes, that is, if and only if
$T$ is not well founded.  For $T$ and $G$ as described above, $\wf (T) = 1-{\sf S}_D (G)$, so
$\wf \swlt {\sf S}_D$, as desired.
\end{proof}

Appending a triangle to each tree node in the previous construction yields a proof that $\wf$ is Weihrauch
equivalent to ${\sf S}_G$ for the graph $G$ consisting of infinitely many disconnected triangles.
(This was proven independently by Arno Pauly \cite{pauly}.)  Thus
the graph from Pauly's embedding observation does not answer our question about isomorphic subgraphs.
Parallelizing the previous theorem yields Weihrauch equivalences mirroring the reverse mathematics of Theorem 2.6 of \cite{hl}.

\begin{corollary}
$\widehat{\sf RC} \sweq \widehat{\sf D} \sweq \wfh$.
\end{corollary}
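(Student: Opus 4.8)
The plan is to transport the strong Weihrauch equivalences of Theorem \ref{PW5} across the parallelization operator, exactly as was done for Theorem \ref{PW1} in the proof of Corollary \ref{PW2}. The essential tool is that parallelization is a monotone closure operation with respect to strong Weihrauch reducibility: whenever ${\sf P} \sweq {\sf Q}$, one also has $\widehat{\sf P} \sweq \widehat{\sf Q}$. This is the content of Proposition 3.6 part 3 of \cite{bgp}, and it has already been invoked in this paper.

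First I would recall that Theorem \ref{PW5} supplies the chain ${\sf RC} \sweq {\sf D} \sweq \wf$. Applying the parallelization closure property separately to each of the two strong equivalences yields $\widehat{\sf RC} \sweq \widehat{\sf D}$ and $\widehat{\sf D} \sweq \widehat{\wf}$. Since $\wfh$ denotes the parallelization $\widehat{\wf}$, composing these gives the desired $\widehat{\sf RC} \sweq \widehat{\sf D} \sweq \wfh$.

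The only point requiring a moment's care is that the closure property preserves \emph{strong} equivalence, so one must confirm that the reductions of Theorem \ref{PW5} are genuinely strong rather than merely ordinary Weihrauch reductions; the analogous preservation statement for $\weq$ would rest on the weaker closure fact. Inspection of that proof shows the post-processing maps are either the identity or the fixed involution $n \mapsto 1-n$, both independent of the instance, so each equivalence there is indeed strong and the parallelized reductions remain strong. There is essentially no obstacle to overcome here, as all the combinatorial work was completed in Theorem \ref{PW5}; this corollary merely records the parallel versions, mirroring the reverse mathematics content of Theorem 2.6 of \cite{hl}.
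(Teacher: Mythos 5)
Your proposal is correct and matches the paper's proof exactly: both apply Proposition 3.6 part 3 of \cite{bgp} (parallelization as a strong closure operation for $\swlt$) to the equivalences of Theorem \ref{PW5}. The extra check that the reductions in Theorem \ref{PW5} are genuinely strong is a reasonable bit of diligence, but the paper treats this as immediate since that theorem already asserts $\sweq$.
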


\begin{proof}
Immediate from Theorem \ref{PW5} by Proposition 3.6 part 3 of \cite{bgp}.
\end{proof}

The application of Weihrauch analysis to the problems of this section yields no new
insights into the corresponding finite complexity theoretic relationships.  However, it does raise
some new Weihrauch analysis questions.  Analyzing infinite versions of other statements from
finite complexity theory might lead to additional interesting Weihrauch reductions.

\begin{bibsection}[Bibliography]
\begin{biblist}

\bib{bgp}{article}{
author={Brattka, Vasco},
author={Gherardi, Guido},
author={Pauly, Arno},
title={Weihrauch Complexity in computable analysis},
date={2018},
pages={1--61},
note={available at arXiv:1808.03202v4},
}

\bib{dorais}{article}{
   author={Dorais, Fran{\c{c}}ois G.},
   title={Classical consequences of continuous choice principles from
   intuitionistic analysis},
   journal={Notre Dame J. Form. Log.},
   volume={55},
   date={2014},
   number={1},
   pages={25--39},
   issn={0029-4527},
   review={\MR{3161410}},
   doi={10.1215/00294527-2377860},
}
   
\bib{doraisetal}{article}{
  author={Dorais, Fran{\c c}ois G.},
  author={Dzhafarov, Damir D.},
  author={Hirst, Jeffry L.},
  author={ Mileti, Joseph R.},
  author={Shafer, Paul},
  title={On uniform relationships between combinatorial problems},
  journal={Trans.\ AMS},
  year={2014},
  volume={368},
  pages={1321--1359},
  doi={10.1090/tran/6465},
}

\bib{hl}{article}{
   author={Hirst, Jeffry L.},
   author={Lempp, Steffen},
   title={Infinite versions of some problems from finite complexity theory},
   journal={Notre Dame J. Form. Log.},
   volume={37},
   date={1996},
   number={4},
   pages={545--553},
   issn={0029-4527},
   review={\MR{1446228}},
   doi={10.1305/ndjfl/1040046141},
}

\bib{hmunif}{article}{
   author={Hirst, Jeffry L.},
   author={Mummert, Carl},
   title={Reverse mathematics and uniformity in proofs without excluded
   middle},
   journal={Notre Dame J. Form. Log.},
   volume={52},
   date={2011},
   number={2},
   pages={149--162},
   issn={0029-4527},
   review={\MR{2794648}},
   doi={10.1215/00294527-1306163},
}

\bib{hm}{article}{
author={Hirst, Jeffry L.},
author={Mummert, Carl},
title={Using Ramsey's theorem once},
journal={Arch. Math. Logic},
date={2019},
pages={1--10},
issn={0933-5846},
doi={10.1007/s00153-019-00664-z},
}

\bib{leaf}{article}{
   author={Hirst, Jeffry L.},
   title={Leaf management},
   journal={Computability},
   volume={9},
   date={2020},
   number={3-4},
   pages={309--314},
   issn={2211-3568},
   review={\MR{4133717}},
   doi={10.3233/com-180243},
}

\bib{kmp}{article}{
   author={Kihara, Takayuki},
   author={Marcone, Alberto},
   author={Pauly, Arno},
   title={Searching for an analogue of $\atr$ in the Weihrauch lattice},
   journal={J. Symb. Log.},
   volume={85},
   date={2020},
   number={3},
   pages={1006--1043},
   issn={0022-4812},
   review={\MR{4231614}},
   doi={10.1017/jsl.2020.12},
}

\bib{k2001}{article}{
   author={Kohlenbach, Ulrich},
   title={Higher order reverse mathematics},
   conference={
      title={Reverse mathematics 2001},
   },
   book={
      series={Lect. Notes Log.},
      volume={21},
      publisher={Assoc. Symb. Log., La Jolla, CA},
   },
   date={2005},
   pages={281--295},
   review={\MR{2185441}},
}

\bib{kbook}{book}{
   author={Kohlenbach, Ulrich},
   title={Applied proof theory: proof interpretations and their use in
   mathematics},
   series={Springer Monographs in Mathematics},
   publisher={Springer-Verlag, Berlin},
   date={2008},
   pages={xx+532},
   isbn={978-3-540-77532-4},
   review={\MR{2445721}},
}

\bib{pauly}{miscellaneous}{
author={Pauly, Arno},
title={Personal communication},
date={2021},
}

\bib{simpson}{book}{
   author={Simpson, Stephen G.},
   title={Subsystems of second order arithmetic},
   series={Perspectives in Logic},
   edition={2},
   publisher={Cambridge University Press, Cambridge; Assoc.
   Symb. Log., Poughkeepsie, NY},
   date={2009},
   pages={xvi+444},
   isbn={978-0-521-88439-6},
   review={\MR{2517689 (2010e:03073)}},
   doi={10.1017/CBO9780511581007},
}

\bib{troelstra}{book}{
author={Troelstra, A.~S.},
title={Metamathematical investigation of intuitionistic arithmetic and analysis},
series={Lect. Notes in Math.},
volume ={344},
publisher={Springer, Berlin},
date={1973},
pages={xvii+485},
isbn={0-387-06491-5},
}

\end{biblist}
\end{bibsection}
\end{document}